\theoremstyle{plain}
\newtheorem{theorem}{Theorem}[section]
\newtheorem{lemma}[theorem]{Lemma}
\newtheorem{assumption}{Assumption}
\theoremstyle{definition}
\newtheorem{definition}{Definition}[section]
\theoremstyle{remark}
\newtheorem{remark}[theorem]{Remark}
\newtheorem{example}[theorem]{Example}
\newcommand\xqed[1]{%
	\leavevmode\unskip\penalty9999 \hbox{}\nobreak\hfill
	\quad\hbox{#1}}
\newcommand\demo{\xqed{$\triangle$}}
\title{A Myopic Adjustment Process for Mean Field Games with Finite State and Action Space}
\author{Berenice Anne Neumann\thanks{University of Trier, Department IV, Universitätsring 19, 54296 Trier, Germany}}
\begin{document}
	
	\maketitle
	\begin{abstract}
		In this paper, we introduce a natural learning rule for mean field games with finite state and action space, the so-called myopic adjustment process. The main motivation for these considerations are the complex computations necessary to determine dynamic mean-field equilibria, which make it seem questionable whether agents are indeed able to play these equilibria. We prove that the myopic adjustment process converges locally towards stationary equilibria with deterministic equilibrium strategies under rather broad conditions. Moreover, for a two-strategy setting, we also obtain a global convergence result under stronger, yet intuitive conditions. 
		 		
 		\textbf{Keywords:} Mean Field Games, Learning in Games, Finite State Space
 		
 		\textbf{JEL Classification:}  C73, C70
	\end{abstract}

	\section{Introduction}

	Mean field games have been introduced by \citet{LasryJapanese2007} and \citet{HuangNCE2006} in order to make dynamic games with a large number of players tractable. The central idea is to approximate these games with many players by a game with a continuum of anonymous players. Thereafter a vibrant field of research emerged in particular concerning games where the dynamics of the individual players are described by diffusions. For a first overview consider the monographs by \citet{BensoussanMFG} and by \citet{CarmonaMFG2018, CarmonaMFGPartTwo2018} or the lecture notes by \citet{CardaliaguetLectureNotes}. However, a central problem for applications is that equilibria are described by forward-backward systems of (stochastic) differential equations and are, therefore, notoriously intractable.
	
 	Recently, also mean field games with finite state space have been considered for example \citet{GomesConti2013, CecchinProbabilistic2018, TrierPreprint, CarmonaFinite, DoncelPaper, NeumannComputation} and \citet[Section 7.2]{CarmonaMFG2018}. 
 	Moreover, several applications have been considered including the spread of corruption, botnet defence, paradigm shift in science and consumer choice \citep{BesancenotParadigm2015,KolokoltsovBotnet2016, KolokoltsovCorruption2017, GomesSocio2014}.
 	In \citet{NeumannComputation} stationary equilibria for mean field games with finite state and action space have been considered and results yielding an explicit characterization of these equilibria have been derived.
 	For applications it is now clearly desirable to understand whether stationary equilibria are an adequate description of agents' behaviour. 
 	One approach is to discuss the question in how far these equilibria are suitable limit objects of dynamic equilibria when the finite time horizon tends to infinity (consider \citet{KolokoltsovBotnetCorruption} for the analysis of an example). 
 	The second approach is to understand in how far stationary equilibria arise when agents apply certain only partly rational decision rules. This approach is a classical one in standard game theory and known as learning.
 	
 	Why players should play equilibrium strategies is a classical concern. 
 	The explanation that agents arrive at these strategies from ``introspection and computation'' is challenged by many facts:
 	The computational complexity of the problems at hand, the question which equilibrium to choose in case of multiple equilibria and experimental evidence \citep{FudenbergLearning}. 
 	Because it was observed in experiments that agents ``learn'' to play equilibria after some time, many authors focussed on the definition and analysis of partially rational ``learning rules'' (for example fictitious play or partial best response) mostly for static games. For an overview consider the monograph by \citet{FudenbergLearning} or the survey by \citet{NachbarLearning}.
 	
 	Recently, learning has also been discussed for mean field games with diffusion-based dynamics:
 	On the one hand, \citet{CardaliaguetLearning} introduced fictitious play for repeated games with finite time horizon which thereafter has been analysed in subsequent publications \citep{HadikhanlooPaper, BrianiLearningMFG} and extended to discrete time finite state mean field games with finite time horizon \citep{HadikhanlooPhDLearning}.
 	On the other hand, \citet{MouzouniMyopic} introduced a learning procedure similar to the myopic adjustment process considered here. He proved existence and local convergence under strong assumptions (quadratic Hamiltonian and Lasry-Lions monotonicity condition).
 	We highlight, that the methods for diffusion-based mean field games cannot be adapted to our setting of a mean field game with finite state and action space, since the crucial assumption for most methods (a unique optimizer of the Hamiltonian) is typically not satisfied (see \citet[Remark 2.5]{NeumannComputation}).

	This is the first paper that discusses learning in mean field games with finite state and action space. We will introduce a myopic adjustment process, where agents choose to play a best response for the scenario that the current population distribution will persist for all future times. This definition yields to a formulation of this process as a differential inclusion and we can prove existence of this process under a continuity assumption.
	
	Then we discuss under which conditions we can expect local convergence towards stationary equilibria. 
	First, we explain that a general convergence result is only possible for deterministic stationary equilibria. 
	Thereafter, we obtain a first convergence result for dynamics constant in $m$.
	Namely, under a classical irreducibility condition we establish local convergence towards stationary equilibria with a deterministic equilibrium strategy. 
	For general dynamics, under some technical conditions as well as a condition similar to the irreducibility condition in the constant case, we are again able to prove convergence towards stationary equilibria with a deterministic equilibrium strategy.	
	We conclude the analysis of local convergence with the discussion of examples.
	
	In the end of the paper, we turn to the question of global convergence for games with only two sensible deterministic stationary strategies. 
	Under certain technical conditions, we obtain that whenever the instantaneous change of the population is non-orthogonal to the set of points where both strategies are simultaneously optimal, global convergence towards deterministic stationary strategies happens or the trajectories stay in the set where both strategies are simultaneously optimal. 
	In examples this is often enough to prove global convergence towards stationary equilibria in mixed strategies, which we illustrate in an example.
	
	The rest of the paper is structured as follows: Section \ref{Section:Model} describes the mean field game model considered in this paper. Section \ref{Section:Process} then introduces the myopic adjustment process and justifies its definition as a sensible partially rational learning rule. Moreover, it presents the myopic adjustment process for a simple example. In Section \ref{Section:Local} we study the local convergence of the myopic adjustment process and in Section \ref{Section:Global} we then study the global convergence for the special case of two strategies.
	
	\section{Stationary Equilibria of Mean Field Games with Finite State and Action Space}
	\label{Section:Model}

	This section describes the mean field game model.
	The setup is the same as in \citet{NeumannComputation} and we refer the reader to this paper for details regarding well-definition and intuitions.
	Moreover, we remark that the model has been first introduced in an analytic formulation and without the notion of stationary equilibria in \citet{DoncelPaper}.
	
	Let $\mathcal{S}=\{1, \ldots, S\}$ ($S>1$) be the set of possible states of each player and let $\mathcal{A}=\{1, \ldots, A\}$ be the set of possible actions. 
	With $\mathcal{P}(\mathcal{S})$ we denote the probability simplex over $\mathcal{S}$ and with $\mathcal{P}(\mathcal{A})$ the probability simplex over $\mathcal{A}$. 
	A \textit{(mixed) strategy} is a measurable function $\pi: \mathcal{S} \times [0,\infty) \rightarrow \mathcal{P}(\mathcal{A})$, $(i,t) \mapsto (\pi_{ia}(t))_{a \in \mathcal{A}}$ with the interpretation that $\pi_{ia}(t)$ is the probability that at time $t$ and in state $i$ the player chooses action $a$. 
	A strategy $\pi=d:\mathcal{S} \times [0,\infty) \rightarrow \mathcal{P}(\mathcal{A})$ is \textit{deterministic} if it satisfies for all $t \ge 0$ and for all $i \in \mathcal{S}$ that there is an $a \in \mathcal{A}$ such that $d_{ia}(t)=1$ and $d_{ia'}=0$ for all $a' \in \mathcal{A} \setminus \{a\}$. 
	Sometimes the following equivalent representation is helpful: Namely, we represent a deterministic strategy as a function $d: \mathcal{S} \times [0,\infty) \rightarrow \mathcal{A}, (i,t) \mapsto d_i(t)$ with the interpretation that $d_i(t)=a$ states that at time $t$ in state $i$ action $a$ is chosen.
	A \textit{stationary strategy} is a map $\pi: \mathcal{S} \times [0,\infty) \rightarrow \mathcal{P}(\mathcal{A})$ such that $\pi_{ia}(t) = \pi_{ia}$ for all $t \ge 0$. 
	With $\Pi$ we denote the set of all (mixed) strategies and with $\Pi^s$ the set of all stationary strategies.
	Similarly, we denote by $D$ the set of all deterministic strategies and by $D^s$ the set of all deterministic stationary strategies.
	
	Let for all $a \in \mathcal{A}$ and $m \in \mathcal{P}(\mathcal{S})$ the matrices $(Q_{\cdot \cdot a}(m))_{a \in \mathcal{A}}$ be conservative generators, that is $Q_{ija}(m) \ge 0$ for all $i, j \in \mathcal{S}$ with $i \neq j$ and $\sum_{j \in \mathcal{S}} Q_{ija}(m) =0$ for all $i \in \mathcal{S}$. 
	The individual dynamics of each player given a Lipschitz continuous flow of population distributions $m: [0, \infty) \rightarrow \mathcal{P}(\mathcal{S})$ and a strategy $\pi: \mathcal{S} \times [0, \infty) \rightarrow \mathcal{P}(\mathcal{A})$ are given as a Markov process $X^\pi(m)$ with given initial distribution $x_0 \in \mathcal{P}(\mathcal{S})$ and infinitesimal generator given by the $Q(t)$-matrix $$\left( Q^\pi(m(t),t) \right)_{ij} = \sum_{a \in \mathcal{A}} Q_{ija}(m(t)) \pi_{ia}(t).$$  	
	Given the initial condition $x_0 \in \mathcal{P}(\mathcal{S})$, the goal of each player is to maximize his expected discounted reward, which is given by 
	\begin{equation}
	\label{valueFunction}
	V_{x_0}(\pi, m) = \mathbb{E} \left[ \int_0^\infty \left( \sum_{a \in \mathcal{A}} r_{X^\pi(m) a} (m(t)) \pi_{X^\pi(m)a}(t) \right) e^{-\beta t} \text{d}t \right],
	\end{equation} where $r: \mathcal{S} \times \mathcal{A} \times \mathcal{P}(\mathcal{S}) \rightarrow \mathbb{R}$ is a real-valued function and $\beta \in (0,1)$ is the discount factor. 
	That is, for a fixed flow of population distributions $m: [0,\infty) \mapsto \mathcal{P}(\mathcal{S})$ the individual agent's decision problem is a Markov decision process with expected discounted reward criterion and time-inhomogeneous reward functions and transition rates. 
	
	In this paper we work under the following standing assumption, which ensures the well-definition of the model as well as the existence of dynamic as well as stationary equilibria \citep{NeumannComputation}:
	
	\begin{assumption}
		\label{assumption:continuous}
		For all $i, j \in \mathcal{S}$ and all $a \in \mathcal{A}$ the function $m \mapsto Q_{ija}(m)$ mapping from $\mathcal{P}(\mathcal{S})$ to $\mathbb{R}$ is Lipschitz-continuous in $m$ . 
		For all $i \in \mathcal{S}$ and all $a \in \mathcal{A}$ the function $m \mapsto r_{ia}(m)$ mapping from $\mathcal{P}(\mathcal{S})$ to $\mathbb{R}$ is continuous in $m$.
	\end{assumption}
	
	\begin{definition}
		\label{definition:Dynamic}
		Given an initial distribution $m_0 \in \mathcal{P}(\mathcal{S})$, a \textit{mean field equilibrium} is a pair $(m,\pi)$ consisting of a flow of population distributions $m: [0,\infty) \rightarrow \mathcal{P}(\mathcal{S})$ with $m(0)=m_0$ and a strategy $\pi: \mathcal{S} \times [0,\infty) \rightarrow \mathcal{P}(\mathcal{A})$ such that
		\begin{itemize}
			\item the distribution of the process $X^\pi(m)$ at time $t$ is given by $m(t)$, and
			\item $V {_{m_0}}(\pi, m) \ge V {_{m_0}}(\pi',m)$ for all $\pi' \in \Pi$. 
		\end{itemize}
	\end{definition} 
	
	\begin{definition}
		\label{definition:Stationary}
		A stationary mean field equilibrium is given by a stationary strategy $\pi$ and a vector $m \in \mathcal{P}(\mathcal{S})$ such that
		\begin{itemize}
			\item the law of $X^\pi(m)$ at any point in time $t$ is given by $m$, and
			\item  {for any initial distribution $x_0 \in \mathcal{P}(\mathcal{S})$ we have} $V {_{x_0}}(\pi, m) \ge V {_{x_0}}(\pi', m)$ for all $\pi' \in \Pi$.
		\end{itemize}
	\end{definition}

	\section{The Myopic Adjustment Process}
	\label{Section:Process}
	
	In general, it is not possible to compute dynamic mean field equilibria for the considered game; it is not even possible to explicitly characterize solutions of the individual control problem for a given non-constant flow of population distributions.
	Moreover, also in the case of a finite time horizon, the search for equilibria can only be reduced to a forward-backward system of ODEs, which can, most of the time, be only solved numerically (see \citet{TrierPreprint}).
	The aim of this section is to motivate and define a reasonable alternative decision mechanism for the agents.
	
	In contrast to \citet{CardaliaguetLearning} we cannot assume that the game is played repeatedly, but instead we have to assume that the agent changes his strategy during the game.
	For this we note that the game at time $t$ with current distribution $m$ is, due to the time-homogeneous formulation and the infinite time horizon, equivalent to the game started at time $0$ with initial distribution $m$.
	Moreover, we remind ourselves that the influence of the individual agent on the game characteristics and thus on the payoff of the other players is negligible.
	Therefore, it is reasonable to assume that the agents do not try to influence the other players' choices, but that they only maximize their own payoff.
	Because of time-homogeneity and the negligible influence on other players we assume that the agents choose Markovian strategies that only depend on the current state and current population distribution.
	
	We assume that the agent when choosing an optimal strategy given the current population distribution $m$ assumes that the population distribution is constant. This myopia is not only a classical simplification, but the only sensible prediction an agent can compute in this setting.
	Indeed, in general one cannot compute optimal strategies for the Markov decision processes with non-stationary transition rates and rewards that arise in this setting.
	Given such a constant prediction of the behaviour of the population, the optimization problem becomes a tractable Markov decision process with stationary transition rates and rewards. 
	It is well known that there is always an optimal stationary strategy for the considered optimization problem \citep{GuoCTMDP2009} and it is again natural to assume that agents choose such a stationary strategy.

	We remark that this assumption that agents choose a stationary strategy is classical and that there are several conceptual reasons for the use of these strategies (see \citet{MaskinMPE2001}).
	Namely, stationary Markov strategies are the simplest (rational) form of decision-making in this context.
	Moreover, this type of strategies is related to subgame perfection, which is as discussed earlier a reasonable requirement in our setting.
	Indeed, these strategies ensure that a game with the same relevant characteristics (i.e. current state and population distribution) is played in the same way.
	Finally, the restriction on this type of strategies reduces the number of possible best responses and thus increase predictive power.

	For our purpose now the following result explicitly characterizing the set of all optimal stationary strategies given a constant flow of population distributions proves to be useful (see \citet[Section 3]{NeumannComputation}): Let us denote by $V^\ast_j(m)$ the unique solution of the optimality equation for the Markov decision process and define 
	\[
		O_i(m)=: \text{argmax}_{a \in \mathcal{A}} \left\{ r_{ia}(m) + \sum_{j \in \mathcal{S}} Q_{ija}(m) V_j^\ast(m)\right\}.
	\]
	Furthermore, we set
	\[
		\mathcal{D}(m) := \{ d: \mathcal{S} \rightarrow \mathcal{A} |d(i) \in O_i(m) \quad \text{for all} \quad i \in \mathcal{S} \}.
	\]
	Then the set of all optimal stationary strategies is given by $\text{conv}(\mathcal{D}(m))$.
	
	We argued that every agent will choose a strategy from the set  $\text{conv}(\mathcal{D}(m))$.
	However, we cannot assume that all agents choose a particular strategy nor that the agents or groups of them agree on a common strategy. Moreover, we also cannot describe which agents will choose which strategy. 
	The only sensible assumption is that the population chooses aggregately a strategy from the set  $\text{conv}(\mathcal{D}(m))$. 	
	The next lemma describes how the population's distribution evolves if all agents adopt this decision mechanism:
	
	\begin{lemma}
		Let $m:[0,\infty) \rightarrow \mathcal{P}(\mathcal{S})$ be the distribution of the population, where at time $t \ge 0$ any agent chooses a strategy from $\text{conv}(\mathcal{D}(m))$. Then
		\begin{equation}
		\label{Equation:InstantaneousChanges}
		\dot{m}(t) \in F(m(t)) := \text{conv} \left\{ \left( \sum_{i \in \mathcal{S}} \sum_{a \in \mathcal{A}} m_i Q_{ija}(m(t)) d_{ia} \right)_{j \in \mathcal{S}}: d \in \mathcal{D}(m(t)) \right\}
		\end{equation} for almost all $t \ge 0$.
	\end{lemma}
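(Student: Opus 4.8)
The plan is to combine the Kolmogorov forward equation for the population flow with the linearity of the generator in the strategy and the convexity of the set of optimal responses. First I would record the dynamics of the population distribution. Since $m(t)$ is by assumption the law of the process $X^\pi(m)$ at time $t$, and since the aggregate action profile played by the population at time $t$ is some $\pi(t)$ with associated generator $(Q^\pi(m(t),t))_{ij} = \sum_{a \in \mathcal{A}} Q_{ija}(m(t)) \pi_{ia}(t)$, the flow $m$ solves the forward equation
\[
\dot{m}_j(t) = \sum_{i \in \mathcal{S}} m_i(t) \sum_{a \in \mathcal{A}} Q_{ija}(m(t)) \pi_{ia}(t)
\]
for almost all $t \ge 0$. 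The restriction to almost all $t$ is precisely what is needed here: $m$ is Lipschitz, hence differentiable almost everywhere and equal to the integral of its derivative, while $t \mapsto Q^\pi(m(t),t)$ is measurable and bounded (the entries $Q_{ija}$ are Lipschitz on the compact simplex by Assumption \ref{assumption:continuous}, and $\pi$ takes values in $\mathcal{P}(\mathcal{A})$), so the forward equation holds in the a.e. sense.

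Next I would argue that the relevant aggregate strategy $\pi(t)$ lies in $\text{conv}(\mathcal{D}(m(t)))$. Each agent individually chooses a strategy from $\text{conv}(\mathcal{D}(m(t)))$, and only an agent's own action distribution in its current state influences the flow; the net rate of transitions out of state $i$ depends on the population only through the \emph{average} action distribution played in state $i$. Because $\text{conv}(\mathcal{D}(m(t)))$ is convex, this average — an integral of points of $\text{conv}(\mathcal{D}(m(t)))$ over the agents — again belongs to $\text{conv}(\mathcal{D}(m(t)))$. Hence there are finitely many $d^{(1)}, \dots, d^{(K)} \in \mathcal{D}(m(t))$ and weights $\lambda_1, \dots, \lambda_K \ge 0$ with $\sum_{k} \lambda_k = 1$ such that $\pi_{ia}(t) = \sum_{k=1}^{K} \lambda_k d^{(k)}_{ia}$ for all $i \in \mathcal{S}$ and $a \in \mathcal{A}$.

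Finally I would substitute this convex representation into the forward equation and interchange the finite sums. Exploiting linearity of the map $\pi \mapsto \sum_{i} m_i(t) \sum_{a} Q_{ija}(m(t)) \pi_{ia}$ yields
\[
\dot{m}_j(t) = \sum_{k=1}^{K} \lambda_k \left( \sum_{i \in \mathcal{S}} \sum_{a \in \mathcal{A}} m_i(t) Q_{ija}(m(t)) d^{(k)}_{ia} \right),
\]
so that $\dot{m}(t)$ is a convex combination of the vectors $\bigl( \sum_{i} \sum_{a} m_i(t) Q_{ija}(m(t)) d^{(k)}_{ia} \bigr)_{j \in \mathcal{S}}$ with $d^{(k)} \in \mathcal{D}(m(t))$, each of which belongs to the generating set in the definition of $F(m(t))$. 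Therefore $\dot{m}(t) \in F(m(t))$ for almost all $t \ge 0$, which is the claim.

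I expect the algebra of the last two paragraphs to be routine; the main obstacle is the first step, namely justifying that the a.e.-defined derivative of the Lipschitz flow $m$ is genuinely matched by the instantaneous transition rates of a single aggregate strategy $\pi(t) \in \text{conv}(\mathcal{D}(m(t)))$, i.e. that the forward Kolmogorov equation holds in the almost-everywhere sense for the time-inhomogeneous, merely measurable generator arising here. Once this identification is secured via the measurability and boundedness guaranteed by Assumption \ref{assumption:continuous}, the result follows purely from bilinearity of the generator in $(m,\pi)$ together with the convexity of $\text{conv}(\mathcal{D}(m(t)))$.
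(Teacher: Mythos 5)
Your proposal is correct and follows essentially the same route as the paper's own proof: invoke the Kolmogorov forward equation (in the Carath\'eodory/a.e.\ sense) for the population flow, observe that the aggregate strategy lies in $\text{conv}(\mathcal{D}(m(t)))$, and conclude by linearity of the generator in $\pi$. You merely make explicit the measurability/boundedness justification and the final convex-combination computation that the paper leaves implicit in ``the desired claim follows.''
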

	
	\begin{proof}
		Using the Kolmogorov forward equation, the individual agent's dynamics given any strategy $\pi \in \Pi^s$ can be equivalently described as being the solution of the ordinary differential equation (in the sense of Caratheodory) 
		$$\dot{x}_j(t) = \sum_{i \in \mathcal{S}} x_i(t) \sum_{a \in \mathcal{A}} Q_{ija}(m(t)) \pi_{ia}(t)$$ with initial condition $x(0)=x_0$.
		Since the aggregated strategy $\pi$ of the population satisfies $\pi \in \text{conv} (\mathcal{D}(m))$ the desired claim follows.
	\end{proof}

	With these preparations we define the myopic adjustment process as a solution of the differential inclusion in the sense of \citet{DeimlingMultivaued} given by \eqref{Equation:InstantaneousChanges}. 	
	Namely, a trajectory of the myopic adjustment process is an absolutely continuous function $m: [0, \infty) \rightarrow \mathcal{P}(\mathcal{S})$ such that 
	\begin{equation}
		\label{Equation:DifferentialInclusion}
		\dot{m}(t) \in F(m(t)) \quad \text{for almost all} \quad t \ge 0, \quad m(0) = m_0.
	\end{equation}
	We remark, that the use of differential inclusion as a modelling tool for situations where uncertainty, the absence of control or a variety of available dynamics occurs is classical \citep{AubinDifferentialInclusions}.

	Before we start the analysis of the long-term behaviour, we note, relying a classical existence result for differential inclusions, that under Assumption \ref{assumption:continuous} always a trajectory of the differential inclusion exists. The proof can be found in the appendix \ref{appendix}.
	
	\begin{theorem}
		\label{Theorem:Existence}
		The differential inclusion defined by \eqref{Equation:InstantaneousChanges} and \eqref{Equation:DifferentialInclusion} admits a solution $m: [0,\infty) \rightarrow \mathcal{P}(\mathcal{S})$.
	\end{theorem}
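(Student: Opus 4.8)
The plan is to invoke a classical existence theorem for differential inclusions of the type found in \citet{DeimlingMultivaued}: if $F$ is an upper semicontinuous set-valued map with nonempty, convex and compact values on the compact set $\mathcal{P}(\mathcal{S})$, then for every initial condition $m_0$ the inclusion $\dot m(t) \in F(m(t))$, $m(0)=m_0$, admits an absolutely continuous solution. The substance of the proof is therefore to verify these structural properties of $F$ defined by \eqref{Equation:InstantaneousChanges} and to argue that the solution extends to all of $[0,\infty)$.

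First I would check that $F(m)$ is nonempty, convex and compact for every $m$. Nonemptiness holds because each $O_i(m)$ is the set of maximizers of a real-valued function over the finite set $\mathcal{A}$ and is thus nonempty, whence $\mathcal{D}(m)$ is nonempty; convexity is immediate since $F(m)$ is defined as a convex hull; and compactness follows because $\mathcal{D}(m) \subseteq \mathcal{A}^{\mathcal{S}}$ is finite, so $F(m)$ is the convex hull of finitely many vectors in $\mathbb{R}^{S}$. Boundedness of $F$ on the compact simplex is automatic and removes the need for any growth condition.

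The key step, which I also expect to be the main obstacle, is upper semicontinuity of $F$, and this rests on continuity of the value function $m \mapsto V^{\ast}(m)$. I would obtain the latter by writing $V^{\ast}(m)$ as the unique fixed point of the (uniformized) Bellman operator of the discounted Markov decision process; since $\beta>0$ this operator is a contraction with a modulus independent of $m$, while its coefficients depend continuously on $m$ by Assumption \ref{assumption:continuous}, so the fixed point inherits continuity in $m$. Given continuity of $V^{\ast}$, the quantity $r_{ia}(m)+\sum_{j}Q_{ija}(m)V_j^{\ast}(m)$ is jointly continuous in $(a,m)$; as $\mathcal{A}$ is finite, each argmax correspondence $O_i$ is then upper semicontinuous with closed graph, and $\mathcal{D}$, being the finite product of the $O_i$, is upper semicontinuous as well. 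Finally, the vector field $(m,d)\mapsto \big(\sum_{i}m_i\sum_{a}Q_{ija}(m)d_{ia}\big)_{j}$ is continuous, so its image over $\mathcal{D}(m)$ is upper semicontinuous, and passing to the convex hull preserves upper semicontinuity while yielding compact convex values. Hence $F$ meets the hypotheses of the existence theorem and a solution exists on some interval $[0,T)$.

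To upgrade a local solution to a global one, I would use forward invariance of $\mathcal{P}(\mathcal{S})$. Because each $Q_{\cdot\cdot a}(m)$ is a conservative generator, every element $v$ of $F(m)$ satisfies $\sum_{j}v_j=0$, and on the face $\{m_k=0\}$ one has $v_k=\sum_{i\neq k}m_i\sum_{a}Q_{ika}(m)d_{ia}\ge 0$ since $Q_{ika}(m)\ge 0$ for $i\neq k$; thus $F(m)$ always points into the simplex along its boundary, so any solution starting in $\mathcal{P}(\mathcal{S})$ stays there. As $\mathcal{P}(\mathcal{S})$ is compact, $\dot m$ is bounded and the solution cannot blow up, so a standard maximal-continuation argument extends it to all of $[0,\infty)$, which yields the claim.
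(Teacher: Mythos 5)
Your proposal follows essentially the same route as the paper: the paper's proof verifies the hypotheses of Lemma 5.1 in \citet{DeimlingMultivaued}, namely (i) upper semicontinuity of $F$, (ii) closedness and convexity of the values $F(m)$, (iii) a linear growth (boundedness) estimate, and (iv) the tangency condition $F(m)\cap T_{\mathcal{P}(\mathcal{S})}(m)\neq\emptyset$. The computations you sketch --- continuity of the value function, upper semicontinuity of the argmax correspondences, the convex-polytope structure of $F(m)$, and the boundary computation $v_k\ge 0$ on the faces $\{m_k=0\}$ together with $\sum_{j}v_j=0$ --- are exactly the paper's (i)--(iv). Your derivation of upper semicontinuity via the contraction property of the Bellman operator (continuity of $V^\ast$, then of the argmax sets $O_i$) differs only cosmetically from the paper's argument, which uses continuity of $m\mapsto V^d(m)$ for each deterministic stationary strategy $d$ and the characterization $\mathcal{D}(m)=\{d: V^d(m)=V^\ast(m)\}$ to get $\mathcal{D}(\tilde m)\subseteq\mathcal{D}(m)$ near $m$.

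One point of logic should be repaired, however. The existence theorem you invoke in your first paragraph is false as stated: for a set-valued map defined only on the closed set $\mathcal{P}(\mathcal{S})$ (which has empty interior in $\mathbb{R}^S$), upper semicontinuity plus nonempty convex compact values does not guarantee even a local solution --- take $F\equiv\{v\}$ with $v$ pointing out of the simplex; no trajectory can remain where $F$ is defined, so no solution exists at all. The tangency (viability) condition is therefore a hypothesis needed for existence itself, not merely a device for upgrading a local solution to a global one: a ``local solution on $[0,T)$'' is only meaningful if $m(t)\in\mathcal{P}(\mathcal{S})$, so that $F(m(t))$ is defined, and that is precisely what tangency buys. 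Your forward-invariance paragraph verifies exactly this condition (in fact the stronger inclusion $F(m)\subseteq T_{\mathcal{P}(\mathcal{S})}(m)$, as does the paper), so the fix is purely organizational: cite the viability form of the theorem (Deimling's Lemma 5.1, or the Aubin--Cellina viability theorem), list the tangency condition among its hypotheses, and your four verifications then deliver the global solution $m:[0,\infty)\rightarrow\mathcal{P}(\mathcal{S})$ directly.
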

	
	For our purpose, it is central to understand how stationary equilibria and the trajectories of the myopic adjustment process interact. The following observation, which is classical for many learning procedures, is a first step:
	
	\begin{remark}
		By definition, a point is a stationary point of \eqref{Equation:DifferentialInclusion} if and only if it is a stationary mean field equilibrium. \demo
	\end{remark}

	In the Section \ref{Section:Local} we will analyse, whether the myopic adjustment process started close to a stationary equilibrium converges towards it. Thereafter, in Section \ref{Section:Global} we analyse in the setting of two sensible deterministic stationary strategies under which conditions convergence towards some stationary equilibrium irrespective of the starting point can be expected.

	We conclude this section by discussing the shape of the myopic adjustment process for an example linked to consumer choice in the mobile phone sector, for which previously in \citet{NeumannComputation} the stationary equilibria have been computed:
	
	\begin{example}
		\label{Example:Definition}
		The agents can choose between two providers and their utility is increasing in the share of customers using the same provider. The agents can switch the provider facing a time-unit cost $c$. However, the decision is not implemented immediately, but according to a Poisson process with rate $b$. The formal description of the model is given as follows: Let $\mathcal{S}=\{1,2\}$ and $\mathcal{A}= \{\mathit{stay}, \mathit{change}\}$. Let $\delta>0$ be small and define
		\[
		f_\delta: \mathbb{R} \rightarrow \mathbb{R} \quad y \mapsto \begin{cases}
		\frac{1}{2\delta} y^2 + \frac{\delta}{2} &\text{if } y \le \delta \\
		y &\text{if } y > \delta
		\end{cases}.
		\]
		Then the transition rates and rewards are giben by
		\begin{align*}
		Q^{\mathit{change}}(m) &= \begin{pmatrix}
		-b & b \\ b & -b
		\end{pmatrix} \\
		Q^{\mathit{stay}}(m) &= \begin{pmatrix}
		- \epsilon & \epsilon \\ \epsilon & -\epsilon
		\end{pmatrix} \\
		r^{\mathit{change}}(m) &= \begin{pmatrix}
		\ln (f_\delta(m_1)) + s_1 -c \\
		\ln(f_\delta(m_2)) + s_2 -c 
		\end{pmatrix} &
		r^{\mathit{stay}}(m) &= \begin{pmatrix}
		\ln (f_\delta(m_1)) + s_1 \\
		\ln(f_\delta(m_2)) + s_2  
		\end{pmatrix},
		\end{align*} where $\epsilon$, $b$, $s_1$, $s_2$ and $c$ are positive constants with $\epsilon<b$.
		
		Let us write $(a_1,a_2)$ for the deterministic strategy $d$ such that $d(i)=a_i$. 
		In \citet{NeumannComputation} it is then shown that
		\[
		\mathcal{D}(m) = \begin{cases}
		(\mathit{change}, \mathit{stay}) &\text{if } m_1 < k_1 \\
		\text{conv}\{ (\mathit{change},\mathit{stay}), (\mathit{stay},\mathit{stay})\} &\text{if } m_1 = k_1 \\
		(\mathit{stay},\mathit{stay}) &\text{if } k_1 < m_1 < k_2 \\
		\text{conv} \{ (\mathit{stay}, \mathit{stay}), (\mathit{stay}, \mathit{change})\} &\text{if } m_1 = k_2 \\
		(\mathit{stay}, \mathit{change}) &\text{if } m_1 > k_2
		\end{cases},
		\] where 
		\[
		k_1 = \frac{\text{exp}\left( -\frac{c(\beta + 2 \epsilon)}{b-\epsilon}-s_1+s_2 \right)}{1+ \text{exp}\left( -\frac{c(\beta + 2 \epsilon)}{b-\epsilon}-s_1+s_2 \right)}
		\quad \text{and} \quad 
		k_2 = \frac{\text{exp}\left( \frac{c(\beta + 2 \epsilon)}{b-\epsilon}-s_1+s_2 \right)}{1+ \text{exp}\left( \frac{c(\beta + 2 \epsilon)}{b-\epsilon}-s_1+s_2 \right)}.
		\]
		Thus, the myopic adjustment process is defined as a solution of the differential inclusion $\dot{m}(t) \in F(m(t))$ with
		\[
		F(m) = \begin{cases}
		\begin{pmatrix}
		- bm_1(t) + \epsilon m_2(t) \\
		b m_1(t) - \epsilon m_2(t)
		\end{pmatrix}
		&\text{if } m_1(t) < k_1 \\
		\text{conv} \left\{\begin{pmatrix}
		- bm_1(t) + \epsilon m_2(t) \\
		b m_1(t) - \epsilon m_2(t)
		\end{pmatrix},\begin{pmatrix}
		- \epsilon m_1(t) + \epsilon m_2(t) \\
		\epsilon m_1(t) - \epsilon m_2(t)
		\end{pmatrix} \right\} &\text{if } m_1(t) = k_1 \\
		\begin{pmatrix}
		- \epsilon m_1(t) + \epsilon m_2(t) \\
		\epsilon m_1(t) - \epsilon m_2(t)
		\end{pmatrix} &\text{if } k_1< m_1(t) < k_2 \\
		\text{conv} \left\{\begin{pmatrix}
		- \epsilon m_1(t) + \epsilon m_2(t) \\
		\epsilon m_1(t) - \epsilon m_2(t)
		\end{pmatrix}, \begin{pmatrix}
		- \epsilon m_1(t) + b m_2(t) \\
		\epsilon m_1(t) - b m_2(t)
		\end{pmatrix} \right\} &\text{if } m_1(t) = k_2 \\
		\begin{pmatrix}
		- \epsilon m_1(t) + b m_2(t) \\
		\epsilon m_1(t) - b m_2(t)
		\end{pmatrix} &\text{if } m_1(t) > k_2
		\end{cases} .
		\]
		In Figure \ref{ConsumerChoiceIllustration} we illustrate the behaviour of this process for a parameter choice that yields for any initial condition to a unique solution of the differential inclusion.
	\end{example}  	
	
	\begin{figure}[h]
		\begin{center}
			\includegraphics[scale=0.6]{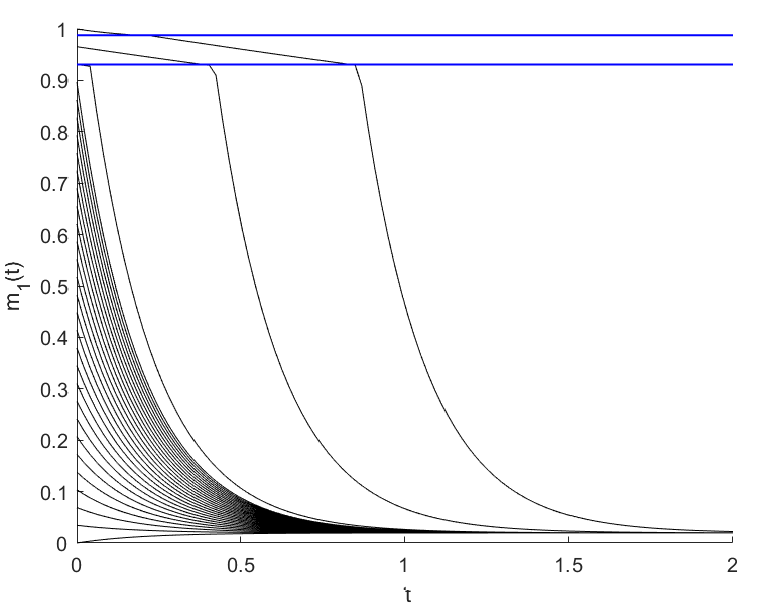}
		\end{center}
	\caption{Illustration of Example \ref{Example:Definition}:
				The figure shows $m_1(t)$ for several trajectories of the myopic adjustment process given different initial conditions. The blue vertical lines represent the thresholds at which the set of optimal strategies changes.}
		\label{ConsumerChoiceIllustration}
	\end{figure}

	\section{Local Convergence}
	\label{Section:Local}
	
	This section discusses the question of local convergence, that is we analyse under which conditions we can expect that for an initial condition close to a stationary equilibrium the trajectories of \eqref{Equation:DifferentialInclusion} converge towards that equilibrium.
	
	Local convergence of the myopic adjustment process will in general not happen if the equilibrium distribution $\bar{m}$ satisfies $|\mathcal{D}(\bar{m})|>1$.
	Indeed, in this case there are points arbitrary close to the equilibrium at which a different strategy than the equilibrium strategy is optimal and for which we cannot tell anything about the behaviour of the trajectories given this strategy -- it might happen that we are pushed away from the equilibrium. 
	Thus, it remains to verify whether we have local convergence towards a stationary deterministic mean field equilibrium $(\bar{m},d)$ where $d$ is the unique optimal strategy at $\bar{m}$. 
	As a first step, we observe the following:
	
	\begin{lemma}
		Let $d$ be the unique optimal stationary strategy for $\bar{m}$ (that is $\mathcal{D}(\bar{m})= \{d\}$). Then there is an $\epsilon>0$ such that for all $m' \in N_\epsilon(\bar{m})$ we have $\mathcal{D}(m') = \{d\}$. 
	\end{lemma}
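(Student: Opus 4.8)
The plan is to reduce everything to the continuity in $m$ of the state--action value
\[
	h_{ia}(m) := r_{ia}(m) + \sum_{j \in \mathcal{S}} Q_{ija}(m)\, V_j^\ast(m),
\]
since by definition $O_i(m) = \operatorname{argmax}_{a \in \mathcal{A}} h_{ia}(m)$, and $\mathcal{D}(m)$ is the product of the sets $O_i(m)$, so it is a singleton precisely when each $O_i(m)$ is. Thus the hypothesis $\mathcal{D}(\bar m) = \{d\}$ is equivalent to the strict inequalities $h_{i,d(i)}(\bar m) > h_{ia}(\bar m)$ for all $i \in \mathcal{S}$ and all $a \in \mathcal{A} \setminus \{d(i)\}$, and the claim will follow once I know these finitely many strict inequalities persist under small perturbations of $m$.

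First I would establish that $m \mapsto V^\ast(m)$ is continuous on $\mathcal{P}(\mathcal{S})$. By Assumption \ref{assumption:continuous} the maps $m \mapsto r_{ia}(m)$ and $m \mapsto Q_{ija}(m)$ are continuous, and since $\mathcal{P}(\mathcal{S})$ is compact the diagonal rates $|Q_{iia}(m)|$ are uniformly bounded by some $\lambda$. Uniformising with this $\lambda$ rewrites the optimality equation $\beta V_i = \max_a \{ r_{ia}(m) + \sum_j Q_{ija}(m) V_j \}$ as an equivalent discrete--time discounted Bellman equation with transition probabilities $p_{ija}(m) = \delta_{ij} + \lambda^{-1} Q_{ija}(m)$, rescaled rewards, and discount factor $\gamma = \lambda/(\beta+\lambda) \in (0,1)$; its unique solution is again $V^\ast(m)$. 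The crucial point is that the associated Bellman operator $T_m$ is a contraction of modulus $\gamma$ \emph{uniformly} in $m$, while $m \mapsto T_m V$ is continuous for each fixed $V$. The standard parametrised--contraction argument then yields that the fixed point $V^\ast(m)$ depends continuously on $m$. (Alternatively one may quote continuity of the value function directly from the MDP analysis in \citet{NeumannComputation, GuoCTMDP2009}.)

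Granting this, each $h_{ia}$ is continuous as a finite sum of products of continuous functions. I would then set
\[
	\eta := \min_{i \in \mathcal{S}} \, \min_{a \neq d(i)} \bigl( h_{i,d(i)}(\bar m) - h_{ia}(\bar m) \bigr),
\]
which is strictly positive, being a minimum over the finite family of strict inequalities encoding $\mathcal{D}(\bar m) = \{d\}$. By continuity of the finitely many maps $h_{ia}$ there is an $\epsilon > 0$ with $|h_{ia}(m') - h_{ia}(\bar m)| < \eta/3$ for every $m' \in N_\epsilon(\bar m)$ and all $i,a$. For such $m'$ and any $a \neq d(i)$ the triangle inequality gives
\[
	h_{i,d(i)}(m') - h_{ia}(m') \ge \eta - \tfrac{\eta}{3} - \tfrac{\eta}{3} = \tfrac{\eta}{3} > 0,
\]
so $d(i)$ remains the unique maximiser, i.e. $O_i(m') = \{d(i)\}$ for every $i$ and hence $\mathcal{D}(m') = \{d\}$.

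The only genuinely technical step is the continuity of $V^\ast(\cdot)$; everything after it is the routine observation that a strict, finite family of maxima is stable under uniformly small perturbations. The point requiring care is that the contraction modulus of $T_m$ can be chosen independently of $m$ (which the uniformisation guarantees), so that the fixed point varies continuously rather than merely measurably.
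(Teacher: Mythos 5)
Your proof is correct, but it runs along a slightly different track than the paper's. The paper compares whole strategies: it invokes the characterization $\mathcal{D}(m)=\{d \in D^s : V^d(m)=V^\ast(m)\}$, cites the continuity of $m \mapsto V^d(m)$ for each deterministic stationary strategy $d$ from \citet[Section 3]{NeumannComputation}, and argues that the finitely many strict inequalities $V^d(\bar m) > V^{\hat d}(\bar m)$, $\hat d \in D^s \setminus \{d\}$, persist in a neighbourhood of $\bar m$. You instead work state-by-state with the argmax sets $O_i(m)$ of the state--action values $h_{ia}(m)$, which requires continuity of the optimal value function $V^\ast(\cdot)$; you supply that yourself via uniformisation and a parametrised-contraction argument rather than citing it. The skeleton is the same in both cases --- finitely many strict inequalities plus continuity --- so the difference is where the inequalities live and how much is quoted versus proved. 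Your version is longer but self-contained, and it has one genuine advantage: the paper's assertion that $V^d(\bar m) > V^{\hat d}(\bar m)$ holds \emph{pointwise} (in every component) for every $\hat d \neq d$ does not literally follow from $\mathcal{D}(\bar m)=\{d\}$ --- if, say, two states do not communicate under any action and $\hat d$ agrees with $d$ on one of them, the corresponding components of $V^d(\bar m)$ and $V^{\hat d}(\bar m)$ coincide, and the paper's argument needs the (easy) repair of tracking strict inequality only in some component. Your formulation avoids this entirely, since uniqueness of $d$ is exactly equivalent to the strict inequalities $h_{i,d(i)}(\bar m) > h_{ia}(\bar m)$ for $a \neq d(i)$, which is all you ever use. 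The paper's approach buys brevity (three lines, given the cited continuity); yours buys self-containedness and a tighter match with the definition of $\mathcal{D}(m)$.
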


	\begin{proof}
		Let $V^d(m)$ denote the expected discounted reward if the player chooses strategy $d$ and the population distribution is $m$ for all times. In \citet[Section 3]{NeumannComputation} it is shown that $m \mapsto V^d(m)$ is continuous. Since $\mathcal{D}(\bar{m})= \{d\}$, we have $V^d(\bar{m}) > V^{\hat{d}}(\bar{m})$ pointwise for all $\hat{d} \in D^s \setminus \{d\}$. By continuity there is an $\epsilon>0$ such that $V^d(m') > V^{\hat{d}}(m')$ pointwise for all $m' \in N_\epsilon(\bar{m})$ and $\hat{d} \in D^s\setminus \{d\}$. Thus, $\mathcal{D}(m') = \{d\}$ for all $m' \in N_\epsilon(\bar{m})$.
	\end{proof}
	
	This lemma now yields that for all $m' \in N_\epsilon(\bar{m})$ we have $F(m') = \{(Q^d(m'))^T m'\}$. 
	Thus, it suffices to investigate whether there is a $\delta>0$ such that for all $m_0 \in \mathcal{P}(\mathcal{S})$ satisfying $|m_0-\bar{m}|< \delta$ we have that the solution of $\dot{m}(t) = (Q^d(m(t)))^Tm(t)$ lies in $N_\epsilon(\bar{m})$ for all $t \ge 0$ and converges towards $\bar{m}$. 
	This question is closely linked to the notion of asymptotically stable solutions of autonomous ordinary differential equations $\dot{x}=f(x)$.
	However, we do not consider arbitrary initial conditions in $N_\delta(\bar{x})$, but only  those that lie in $\mathcal{P}(\mathcal{S})$. Moreover, we always face, since $Q(m)$ is conservative for all $m \in \mathcal{P}(\mathcal{S})$, a zero eigenvalue of the Jacobian.
	Thus, general stability results are not applicable.
	
	The first positive result we present covers the case where the dynamics given the equilibrium strategy are constant, that is $\dot{m} = (Q^d)^T m$. In this setting it is even possible to explicitly characterize what ``close'' means:
	
	\begin{theorem}
		\label{Theorem:LocalConstant}
		Let $(\bar{m}, d)$ be a stationary mean field equilibrium such that $\mathcal{D}(\bar{m})=\{d\}$ (that is, $d$ is the unique optimal strategy at $\bar{m}$). Furthermore, assume that $Q^d(m)$ is constant in $m$ and an irreducible generator.
		Then  there is a $\delta>0$ such that any solution of the myopic adjustment process \eqref{Equation:DifferentialInclusion} with initial condition $m_0 \in \mathcal{P}(\mathcal{S}) \cap N_\delta(\bar{m})$ converges exponentially fast to $\bar{m}$, i.e. there are constants $C_1,C_2 >0$ such that
		$$||m(t)-\bar{m}|| \le C_1 e^{-C_2t} \quad \text{for all } t \ge 0.$$
	\end{theorem}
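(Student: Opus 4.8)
The plan is to reduce the differential inclusion near $\bar m$ to a single autonomous linear ODE and then to exploit the spectral structure of an irreducible generator. By the preceding lemma there is an $\epsilon>0$ with $\mathcal{D}(m')=\{d\}$ for all $m'\in N_\epsilon(\bar m)$, so that $F(m')=\{(Q^d)^T m'\}$ on this neighbourhood (using that $Q^d$ is constant). Hence, as long as a trajectory remains in $N_\epsilon(\bar m)$, it solves $\dot m = A m$ with $A:=(Q^d)^T$. Since $(\bar m,d)$ is a stationary equilibrium, $\bar m$ is a stationary point, i.e.\ $A\bar m = (Q^d)^T\bar m = 0$; equivalently $\bar m$ is the (unique, by irreducibility) stationary distribution of $Q^d$.

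Next I would analyse the spectrum of $A$. Because $Q^d$ is a conservative generator, every Gershgorin disc is centred at $Q^d_{ii}\le 0$ with radius $-Q^d_{ii}$ and is therefore tangent to the imaginary axis at the origin; consequently all eigenvalues of $A$ lie in $\{\operatorname{Re}(z)\le 0\}$ and the only eigenvalue on the imaginary axis is $0$. Irreducibility guarantees that this eigenvalue is algebraically simple, with right eigenvector $\bar m$. I would then use the $A$-invariant splitting $\mathbb{R}^S = \mathrm{span}(\bar m)\oplus H_0$, where $H_0:=\{x:\sum_i x_i = 0\}$ is the tangent space of the simplex; invariance of $H_0$ follows from $\mathbf{1}^T A = (Q^d\mathbf{1})^T = 0$, and $\bar m\notin H_0$ because $\mathbf{1}^T\bar m = 1$. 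As $0$ is simple and is accounted for entirely by the $\mathrm{span}(\bar m)$ block, the restriction $A|_{H_0}$ carries precisely the nonzero eigenvalues of $A$, all of which have strictly negative real part, so its spectral abscissa equals some $-C_2<0$.

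With this splitting the estimate is routine. Any initial condition $m_0\in\mathcal{P}(\mathcal{S})$ satisfies $m_0-\bar m\in H_0$, and while the trajectory stays in $N_\epsilon(\bar m)$ the displacement $y(t):=m(t)-\bar m$ solves $\dot y = A y$ with $y(0)\in H_0$, hence remains in $H_0$ and obeys $\|y(t)\| = \|e^{A t}y(0)\|\le C_1 e^{-C_2 t}\|y(0)\|$ for a suitable $C_1\ge 1$ (absorbing any polynomial factors coming from Jordan blocks). Choosing $\delta>0$ so small that $C_1\delta<\epsilon$, a standard continuation argument shows the trajectory never leaves $N_\epsilon(\bar m)$: on the maximal interval where $m(t)\in N_\epsilon(\bar m)$ the bound gives $\|m(t)-\bar m\|\le C_1\delta<\epsilon$, so by continuity that interval cannot be finite. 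Therefore the linear regime persists for all $t\ge 0$ and the exponential bound $\|m(t)-\bar m\|\le C_1 e^{-C_2 t}\|m_0-\bar m\|$ holds globally in time, which is of the asserted form.

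The main obstacle is the spectral step: one must show not that $A$ is stable — it is not, owing to the forced zero eigenvalue coming from conservativity — but that the unique neutral direction $\bar m$ is transverse to the simplex, so that the dynamics restricted to the invariant affine hyperplane $\{x:\sum_i x_i=1\}$ is genuinely contracting. Establishing the simplicity of the zero eigenvalue (where irreducibility is essential) together with the invariance of $H_0$ are the crucial ingredients; the continuation argument keeping the trajectory inside the linear region $N_\epsilon(\bar m)$ is the remaining, more routine, point to handle carefully.
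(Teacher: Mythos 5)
Your proof is correct, and it shares the paper's overall skeleton: use the preceding lemma to reduce the inclusion to the linear ODE $\dot m=(Q^d)^T m$ on $N_\epsilon(\bar m)$, exploit the spectral structure of an irreducible generator (simple zero eigenvalue with eigenvector $\bar m$, all other eigenvalues with strictly negative real part), and show the trajectory never leaves $N_\epsilon(\bar m)$. The execution of the key step, however, is genuinely different. The paper expands $m_0$ in a full basis of generalized eigenvectors of $(Q^d)^T$, writes the solution explicitly as a sum of polynomial-times-exponential terms, bounds each term by its maximum over $[0,\infty)$, and invokes ergodicity of the chain to conclude that the coefficient of $\bar m$ equals $1$; this componentwise bookkeeping is exactly what produces the explicit $\delta$ of Remark \ref{Remark:ExplicitDelta}. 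You instead work with the coarser $A$-invariant splitting $\mathbb{R}^S=\mathrm{span}(\bar m)\oplus H_0$ with $H_0=\{x:\mathbf{1}^T x=0\}$, note that $m_0-\bar m\in H_0$ holds automatically because both vectors sum to one (a cleaner substitute for the paper's ergodicity argument, which could in fact be short-circuited the same way, since all generalized eigenvectors for nonzero eigenvalues lie in $H_0$), and then apply the standard operator-norm bound for $e^{tA|_{H_0}}$, whose spectrum lies in the open left half-plane because the simple zero eigenvalue is carried entirely by $\mathrm{span}(\bar m)$. Deriving the spectral location from Gershgorin discs rather than citing it, as the paper does, is self-contained and correct; note that irreducibility is still indispensable for the algebraic simplicity of the zero eigenvalue, in your argument as in the paper's. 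The trade-off is constructiveness: your semigroup estimate absorbs Jordan-block polynomial factors into an unspecified constant $C_1$, so your $\delta$ is not explicit, whereas the paper's more laborious eigenbasis computation is precisely what makes the quantitative choice of $\delta$ in Remark \ref{Remark:ExplicitDelta} possible.
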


	\begin{remark}
		\label{Remark:ExplicitDelta}
		We can explicitly determine $\delta>0$: Namely, let $\lambda_1, \ldots, \lambda_n$ be the eigenvalues of $Q^d$. By \citet[Corollary 4.9]{AsmussenQueues2003} there is an eigenvalue $\lambda_i=0$ with multiplicity one and eigenvector $\bar{m}$ and such that $\text{Re}(\lambda_j)<0$ for all $j \neq i$. Without loss of generality assume that $i=1$ and denote by $m(\lambda_j)$ the multiplicity of $\lambda_j$ and by $(v_j^0, \ldots, v_j^{m(\lambda_j)-1})$ the basis of the generalized eigenspace $\text{Eig}(\lambda_j) := \ker \left( (Q^d)^T- \lambda_jI\right)^{m(\lambda_j)}$.
		Choose $\epsilon>0$ such that for all $\tilde{m} \in N_\epsilon(\bar{m})$ it holds that $\mathcal{D}(\tilde{m}) = \{d\}$ and define
		$$C_j^k:= \sum_{l=0}^{m(\lambda_j)-1} e^{-l} \frac{l^l}{l! (-\text{Re}(\lambda_j))^l} \left|\left|\left((Q^d)^T - \lambda_j I\right)^l v_j^l\right| \right|$$
		for all $j \in \{2, \ldots, n\}$ and $k \in \{0, \ldots, m(\lambda_j)-1\}$, then
		$$\delta := \frac{\epsilon/2 \cdot \min_{j \in \{2, \ldots, n\}, k \in \{0, \ldots, m(\lambda_j)\}} ||v_i^k||}{\min_{j \in \{2, \ldots, n\}, k \in \{0, \ldots, m(\lambda_j)\}} C_i^k}$$ is a suitable choice.
	\end{remark}

	The proof relies on the classical result that the solutions of a linear ODE with initial condition $v \in Eig(\lambda)$ for some eigenvalue $\lambda$  is given by  $x(t) = e^{\lambda t} \sum_{l=0}^{m(\lambda)-1} t^l/l! \left( (Q^d)^T - \lambda I\right)^l v$ \citep[Theorem 2.11]{LogemannODE} as well as a sensible treatment of the eigenvalue structure of $Q^d$:
	
	\begin{proof}
		Let us use the same notation as in Remark \ref{Remark:ExplicitDelta}. 
		We note that $$(\bar{m}, v_2^0, \ldots, v_2^{m(\lambda_2)-1}, \ldots, v_n^0, \ldots, v_i^{m(\lambda_n)-1})$$ is a basis of $\mathbb{R}^S$. 
		Thus, for any initial condition $m_0 \in \mathbb{R}^S$ we find a unique set of constants $(\alpha_i^k)$ such that $m_0 = \alpha_1^0 \bar{m} + \sum_{i=2}^n \sum_{k=0}^{m(\lambda_i)-1} \alpha_i^k v_i^k$.
		By \citet[Theorem 2.11]{LogemannODE} and since $\bar{m}$ is the eigenvector for the eigenvalue $0$ we obtain
		$$m(t) = \alpha_1^0 \bar{m} + \sum_{i=2}^n e^{\lambda_i t} \sum_{k=0}^{m(\lambda_i)-1} \frac{t^l}{l!} ((Q^d)^T-\lambda_i I)^l v_i^k.$$
		Since the continuous time Markov chain with generator $Q^d$ is ergodic, we have that $m(t) \rightarrow \bar{m}$ for $t \rightarrow \infty$. 
		Since moreover $\text{Re}(\lambda_i)<0$ it holds that $\alpha_1^0=1$.
		Thus, using that the function $t \mapsto e^{Re(\lambda_i)t} \frac{t^l}{l!}$ has a unique global maximum in $[0,\infty)$ at $t=- \frac{l}{Re(\lambda_i)}$, we obtain
		\begin{align}
		\notag
			||m(t)- \bar{m}|| &= \left| \left| \sum_{i=2}^n e^{\lambda_i t} \sum_{k=0}^{m(\lambda_i)-1} \alpha_i^k \sum_{l=0}^{m(\lambda_i)-1} \left( (Q^d)^T - \lambda_i \right)^l v_i^k \right| \right| \\
			\label{Equation:YieldsExponentialConvergence}
			&\le \sum_{i=2}^n \sum_{k=0}^{m(\lambda_i)-1} \alpha_i^k \left(\sum_{l=0}^{m(\lambda_i)-1} e^{\text{Re}(\lambda_i)t} \frac{t^l}{l!} \left| \left| \left( (Q^d)^T- \lambda_i I \right)^l v_i^k \right| \right| \right) \\ \notag
			&\le \sum_{i=2}^n \sum_{k=0}^{m(\lambda_i)-1} |\alpha_i^k| C_i^k.
		\end{align}
		If $m_0 \in N_\delta(\bar{m}) \cap \mathcal{P}(\mathcal{S})$, then 
		$$\left| \left| m_0 - \bar{m} \right| \right| = \sum_{i=2}^n \sum_{k=0}^{m(\lambda_i)-1} |\alpha_i^k| \left| \left| v_i^k\right| \right| < \frac{\epsilon \min_{i,k} ||v_i^k||}{\max_{i,k} C_i^k},$$ which implies $\sum_{i=2}^n \sum_{k=0}^{m(\lambda_i)-1} |\alpha_i^k| C_i^k < \epsilon$. 
		Therefore, $m(t) \in N_\epsilon(\bar{m})$ for all $t \ge 0$. The exponential convergence then follows from \eqref{Equation:YieldsExponentialConvergence}.
	\end{proof}

	Also in the case of general dynamics we can provide a similar positive statement. However, we have to impose additional conditions, since the necessary eigenvalue structure does not follow immediately in this setting:
	
	\begin{theorem}
		Let $(\bar{m},d)$ be a stationary mean field equilibrium such that $\mathcal{D}(\bar{m})=\{d\}$ (that is, $d$ is the unique optimal strategy at $\bar{m}$). 
		Let $O \supseteq \mathcal{P}(\mathcal{S})$ be an open set such that $Q^d: O \rightarrow \mathbb{R}^{S \times S}$ is componentwise Lipschitz continuous, the matrix $Q^d(m)$ is a transition rate matrix for all $m \in \mathcal{P}(\mathcal{S})$ and the function $f^d: O \rightarrow \mathbb{R}^S, m \mapsto (Q^d(m))^Tm$ is continuously differentiable in $m$.
		Assume further that the Jacobian $\frac{\partial}{\partial m} f^d(m)$ has a zero eigenvalue with eigenvector $\bar{m}$ and all other eigenvalues have strictly negative real parts. 
		Then there is a $\delta>0$ such that any solution of the myopic adjustment process \eqref{Equation:DifferentialInclusion} with initial condition $m_0 \in \mathcal{P}(\mathcal{S}) \cap N_\delta(\bar{m})$ converges exponentially fast to $\bar{m}$, i.e. there are constants $C_1,C_2 >0$ such that
		$$||m(t)-\bar{m}|| \le C_1 e^{-C_2 t} \quad \text{for all } t \ge 0.$$
	\end{theorem}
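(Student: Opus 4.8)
The plan is to localize the differential inclusion to the ordinary differential equation $\dot m = f^d(m)$ and then to exploit that the unavoidable zero eigenvalue points in the direction of mass conservation, so that the dynamics restricted to the invariant affine hyperplane carrying the simplex is governed by a Hurwitz matrix. First, by the preceding lemma there is an $\epsilon>0$ with $\mathcal{D}(m')=\{d\}$, and hence $F(m')=\{f^d(m')\}$, for all $m'\in N_\epsilon(\bar m)$. On $N_\epsilon(\bar m)$ the differential inclusion therefore reduces to the ODE $\dot m = f^d(m)$, whose right-hand side is $C^1$ and in particular locally Lipschitz, so solutions through points of $N_\epsilon(\bar m)$ are unique as long as they remain in this neighbourhood.

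Next I would record the two invariances that make the zero eigenvalue harmless. Writing $\mathbf{1}=(1,\dots,1)^T$, for $m\in\mathcal{P}(\mathcal{S})$ the matrix $Q^d(m)$ is a transition rate matrix, so $Q^d(m)\mathbf{1}=0$ and thus $\mathbf{1}^T f^d(m) = (Q^d(m)\mathbf{1})^T m = 0$; moreover on each face $\{m_i=0\}$ of the simplex one has $\dot m_i = \sum_{k\neq i} m_k Q^d_{ki}(m)\ge 0$, so that $\mathcal{P}(\mathcal{S})$ is forward invariant by Nagumo's theorem and every trajectory started in $\mathcal{P}(\mathcal{S})\cap N_\epsilon(\bar m)$ stays in the affine hyperplane $H:=\{x\in\mathbb{R}^S:\mathbf{1}^T x=1\}$ as long as it stays in $N_\epsilon(\bar m)$. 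Assuming, as is the relevant case, that $\bar m$ lies in the relative interior of $\mathcal{P}(\mathcal{S})$, the identity $\mathbf{1}^T f^d\equiv 0$ holds on a relative neighbourhood of $\bar m$ in $H$, and differentiating it along the tangent space $V_0:=\{x:\mathbf{1}^T x=0\}$ of $H$ shows that $V_0$ is invariant under the Jacobian $J:=Df^d(\bar m)$.

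The key algebraic step is then to read off the spectrum of $J$. By hypothesis $J\bar m=0$, so $\mathrm{span}(\bar m)$ is $J$-invariant, and because $\mathbf{1}^T\bar m=1\neq 0$ we have the $J$-invariant decomposition $\mathbb{R}^S=\mathrm{span}(\bar m)\oplus V_0$. Consequently $\mathrm{spec}(J)=\{0\}\cup\mathrm{spec}(J|_{V_0})$, and since by assumption the only zero eigenvalue is the one belonging to $\bar m$ while all remaining eigenvalues have strictly negative real part, the restriction $A:=J|_{V_0}$ is Hurwitz. Passing to the coordinate $w:=m-\bar m$, which lies in $V_0$ for every $m\in\mathcal{P}(\mathcal{S})$, the reduced dynamics read $\dot w = Aw + g(w)$ on $V_0$ with $g(w)=f^d(\bar m+w)-Aw=o(\|w\|)$ by the continuous differentiability of $f^d$ together with $f^d(\bar m)=0$.

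Finally I would invoke the standard linearization principle for exponential stability: choosing the symmetric positive definite $P$ on $V_0$ solving the Lyapunov equation $A^T P + PA = -I$, the function $W(w)=w^T P w$ satisfies $\dot W \le -\tfrac12\|w\|^2$ for all sufficiently small $\|w\|$, so a small sublevel set of $W$ is forward invariant, contained in $N_\epsilon(\bar m)$, and on it $W$, hence $\|w\|$, decays exponentially. Taking $\delta>0$ small enough that $\mathcal{P}(\mathcal{S})\cap N_\delta(\bar m)$ is contained in this sublevel set guarantees both that the trajectory never leaves $N_\epsilon(\bar m)$ (so the reduction to $\dot m=f^d(m)$ remains valid for all $t\ge 0$) and that $\|m(t)-\bar m\|=\|w(t)\|\le C_1 e^{-C_2 t}$, which is the assertion. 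The main obstacle is precisely the handling of the forced zero eigenvalue: everything hinges on recognizing that it corresponds to conservation of probability mass and is transverse to the invariant hyperplane $H$, so that the genuinely contracting behaviour of the restriction $A=J|_{V_0}$ can be extracted; the remaining work (forward invariance of the simplex, the Lyapunov estimate, and the choice of $\delta$) is routine.
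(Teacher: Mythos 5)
Your proof is correct on the case it actually treats, and it takes a genuinely different route from the paper's. The paper localizes to the ODE $\dot m=f^d(m)$ exactly as you do, but then copes with the forced zero eigenvalue via the generalized-eigenspace decomposition $\mathbb{R}^S=\mathrm{span}(\bar m)\oplus\bigoplus_{i\ge 2}\mathrm{Eig}(\lambda_i)$ (which is $J$-invariant for free) together with a Hirsch-type adapted inner product in which $\langle Ax,x\rangle_B\le -b\|x\|_B^2$ on the stable part; the component of $x=m-\bar m$ along $\bar m$ is then controlled by a compactness argument (the constant $D$), and exponential decay follows from $\frac{\partial}{\partial t}\|x(t)\|_B\le -cD\|x(t)\|_B$. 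You instead restrict the dynamics to the mass-conservation hyperplane, claim $J(V_0)\subseteq V_0$ for $V_0=\{x:\mathbf{1}^Tx=0\}$, conclude that $A=J|_{V_0}$ is Hurwitz, and invoke the standard Lyapunov-equation linearization principle. Where it applies, your route is cleaner and more elementary: it avoids the adapted-norm machinery and the compactness constant entirely.

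There is, however, one genuine gap: your derivation of the invariance $J(V_0)\subseteq V_0$ differentiates the identity $\mathbf{1}^Tf^d\equiv 0$ on a relative neighbourhood of $\bar m$ in $H$, which you justify by assuming $\bar m$ lies in the relative interior of $\mathcal{P}(\mathcal{S})$. The theorem has no such hypothesis, and the restriction is not harmless: the paper explicitly applies the result to boundary equilibria (the equilibrium distributions $(1,0,0)$ and $(0,1,0)$ in the corruption example). Note that $Q^d(m)$ is only assumed to be a generator for $m\in\mathcal{P}(\mathcal{S})$, so $\mathbf{1}^Tf^d$ need not vanish off the simplex, and at a boundary point your differentiation argument does not cover the directions $v\in V_0$ that leave the simplex. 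The gap is fixable without interiority: $\phi:=\mathbf{1}^Tf^d$ is $C^1$ on $O$ and vanishes on all of $\mathcal{P}(\mathcal{S})$, and for every $\bar m\in\mathcal{P}(\mathcal{S})$ the set $U:=\{v\in V_0:\ v_i>0\ \text{for all } i \text{ with } \bar m_i=0\}$ is nonempty and open in $V_0$, with $\bar m+tv\in\mathcal{P}(\mathcal{S})$ for each $v\in U$ and small $t>0$. Hence the one-sided derivative gives $\nabla\phi(\bar m)\cdot v=0$ for all $v\in U$, and since $v\mapsto\nabla\phi(\bar m)\cdot v$ is linear and vanishes on an open subset of $V_0$, it vanishes on all of $V_0$; this is exactly $\mathbf{1}^TJv=0$ for all $v\in V_0$, i.e.\ the invariance you need. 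With this replacement your argument proves the theorem in full generality; the remaining steps (Hurwitz property of $J|_{V_0}$ from simplicity of the zero eigenvalue, forward invariance of the simplex, the Lyapunov estimate, and the choice of $\delta$) go through unchanged for boundary $\bar m$.
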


	\begin{remark}
		This result covers Theorem \ref{Theorem:LocalConstant}, however, the proof is non-constructive. In particular, we cannot, in contrast to the setting of Theorem \ref{Theorem:LocalConstant}, explicitly describe $\delta$.
	\end{remark}

	The central idea of the proof has also been used in the analysis of nonlinear sinks in \citet{HirschNonlinearSink}, namely to bound $\frac{\partial}{\partial t} ||x(t)||_B$ with $x(t) = m(t)- \bar{m}$ for a suitable basis (and corresponding scalar product (i.e. $\langle b_i, b_j\rangle = \delta_{ij}$) and norm).
	More precisely, since $\langle x,y\rangle_B = x^TCy$ for any basis $B$ we obtain a product rule ($\frac{\partial}{\partial t} \langle g(t), h(t)\rangle_B = \left\langle \frac{\partial}{\partial t} g(t), h(t) \right\rangle_B+ \left\langle g(t), \frac{\partial}{\partial}h(t) \right\rangle_B$) and using this
	$$\frac{\partial}{\partial t} ||x(t)||_B = \frac{\partial}{\partial} \sqrt{\langle x(t),x(t) \rangle_B} = \frac{1}{||x(t)||_B} \langle \dot{x}(t), x(t) \rangle_B.$$
	Also here the central step of the proof is to find a suitable constant $C>0$ such that $$\frac{1}{||x(t)||_B} \langle \dot{x}(t), x(t) \rangle_B \le - C ||x(t)||_B.$$
	However, it is not possible to directly use the techniques of \citet{HirschNonlinearSink} because of the zero eigenvalue (which always occurs since $Q^d(m)$ is conservative).  
	
	\begin{proof}
		Give $\mathbb{R}^S$ new coordinates via the transformation $x=m-\bar{m}$, which in particular means that we now consider $\tilde{f}^d(x)=f^d(x+\bar{m})$. 
		Denote by $A$ the Jacobian matrix of $\tilde{f}^d$ at $0$.
		Let $\lambda_1, \ldots, \lambda_n$ be the eigenvalues of $A$.
		Without loss of generality $\lambda_1=0$.
		By assumption $\text{Re}(\lambda_i)<0$ for all $i \in \{2, \ldots, n\}$, moreover, there are constants $b,c>0$ such that $\text{Re}(\lambda_i)<-b<-c$ for all $i \in \{2, \ldots, n\}$.
		
		By \citet[Chapter 7]{HirschNonlinearSink} there is a basis $$B= (b_1^0, b_2^0, \ldots, b_2^{m(\lambda_1)-1}, \ldots, b_n^0, \ldots, b_n^{m(\lambda_n)-1})$$ with $b_1^0 = \bar{m}$ and corresponding inner product given by $\langle b_i^k, b_j^l\rangle = 1_{\{i=j, k=l\}}$ such that
		\begin{itemize}
			\item[(i)] for all $i \in \{1, \ldots, n\}$ the family $(b_i^0, \ldots, b_i^{m(\lambda_i)-1})$ is a basis of the generalized eigenspace $\text{Eig}(\lambda_i)$
			\item[(ii)] for all $i \in \{2, \ldots, n\}$ and $x \in \text{span}(b_i^0, \ldots, b_i^{m(\lambda_i)-1})$ we have
			$\langle Ax, x \rangle_B \le -b ||x||_B^2.$
		\end{itemize} 
		
		We first obtain that for all vectors $x= \sum_{i=2}^n \sum_{k=0}^{m(\lambda_i)-1} \alpha_i^k b_i^k$ for some constants $(\alpha_i^k)$ we have $\langle Ax,x\rangle_B \le -b ||x||_B^2$. 
		Indeed, since $Ab_i^j \in \text{Eig}(\lambda_i)$ we obtain
		\allowdisplaybreaks
		\begin{align*}
		\langle Ax,x \rangle_B &= \left\langle A \cdot \left( \sum_{i=2}^n \sum_{k=0}^{m(\lambda_i)-1} \alpha_i^k b_i^k \right), \sum_{i=2}^n \sum_{k=0}^{m(\lambda_i)-1} \alpha_i^k b_i^k \right\rangle_B \\
		&= \sum_{i=2}^n \left\langle A \left( \sum_{k=0}^{m(\lambda_i)-1} \alpha_i^k b_i^k \right), \sum_{k=0}^{m(\lambda_i)-1} \alpha_i^k b_i^k \right\rangle_B \\
		&\quad  + \sum_{i=2}^n \sum_{j=2, j \neq i}^n \left\langle \sum_{k=0}^{m(\lambda_i)-1} \alpha_i^k A b_i^k, \sum_{k=0}^{m(\lambda_j)-1} \alpha_j^k b_j^k \right\rangle_B \\
		&\overset{(ii)}{\le}  -b \sum_{i=2}^n \left| \left| \left( \sum_{k=0}^{m(\lambda_i)-1} \alpha_i^k b_i^k \right) \right| \right|_B^2 + 0 \\
		&= -b \sum_{i=2}^n \left\langle \sum_{k=0}^{m(\lambda_i)-1} \alpha_i^k b_i^k, \sum_{k=0}^{m(\lambda_i)-1} \alpha_i^k b_i^k \right\rangle_B \\
		&= -b  \left\langle \sum_{i=2}^n \sum_{k=0}^{m(\lambda_i)-1} \alpha_i^k b_i^k, \sum_{i=2}^n \sum_{k=0}^{m(\lambda_i)-1} \alpha_i^k b_i^k \right\rangle_B \\
		&= - b \left| \left| \sum_{i=2}^n \sum_{k=0}^{m(\lambda_i)-1} \alpha_i^k b_i^k \right| \right|_B^2.
		\end{align*}
		
		As a next step we note that the set $\mathcal{P}(\mathcal{S}) - \bar{m}:= \{x \in \mathbb{R}^S: \exists m \in \mathcal{P}(\mathcal{S}): x = m - \bar{m} \}$ is compact and that $$(\alpha_i^k)_{i \in \{1, \ldots, n\}, k \in \{1, \ldots, m(\lambda_i)-1\}} \mapsto \alpha_1^0 \bar{m} + \sum_{i=2}^n \sum_{k=0}^{m(\lambda_i)-1} \alpha_i^k b_i^k$$ is a homeomorphism.
		Thus, also 
		$$P:= \left\{ (\alpha_i^k)_{i \in \{1, \ldots, n\}, k \in \{1, \ldots, m(\lambda_i)-i\}} \in \mathbb{R}^S: \alpha_1^0 \bar{m} + \sum_{i=2}^n \sum_{k=0}^{m(\lambda_i)-1} \alpha_i^k b_i^k \in \mathcal{P}(\mathcal{S}) - \bar{m}\right\}$$ is compact. 
		
		For $x= \alpha_1^0 \bar{m} + \sum_{i=2}^n \sum_{k=0}^{m(\lambda_i)-1} \alpha_i^k b_i^k \in \mathcal{P}(\mathcal{S})- \bar{m}$ we moreover obtain that
			\begin{align*}
		\langle Ax,x \rangle_B &= \left\langle  A \left(\alpha_1^0 \bar{m} + \sum_{i=2}^n \sum_{k=0}^{m(\lambda_i)-1} \alpha_i^k b_i^k \right), \alpha_1^0 \bar{m} + \sum_{i=2}^n \sum_{k=0}^{m(\lambda_i)-1} \alpha_i^k b_i^k \right\rangle_B \\
		&= (\alpha_1^0)^2 \langle A \bar{m}, \bar{m} \rangle_B + \left \langle  \alpha_1^0  A \bar{m}, \sum_{i=2}^n \sum_{l=0}^{m(\lambda_i)-1}   \alpha_i^k b_i^k\right \rangle_B + \left\langle \sum_{i=2}^n \sum_{k=0}^{m(\lambda_i)-1}  \alpha_i^k  A   b_i^k, \alpha_0^1 \bar{m} \right\rangle_B \\
		& \quad  + \left\langle A \cdot \sum_{i=2}^n \sum_{k=0}^{m(\lambda_i)-1} \alpha_i^k b_i^k, \sum_{i=2}^n \sum_{k=0}^{m(\lambda_i)-1} \alpha_i^k b_i^k \right\rangle_B \\
		&= 0 + 0 + 0  + \left\langle A \cdot \sum_{i=2}^n \sum_{k=0}^{m(\lambda_i)-1} \alpha_i^k b_i^k, \sum_{i=2}^n \sum_{k=0}^{m(\lambda_i)-1} \alpha_i^k b_i^k \right\rangle_B \\
		&\le  -b \cdot  \left| \left| \sum_{i=2}^n \sum_{k=0}^{m(\lambda_i)-1} \alpha_i^k b_i^k \right| \right|_B^2 \\
		&=  -b \cdot \underbrace{\frac{\left| \left| \sum_{i=2}^n \sum_{k=0}^{m(\lambda_i)-1} \alpha_i^k b_i^k \right| \right|_B^2}{\left| \left| \alpha_1^0 \bar{m} + \sum_{i=2}^n \sum_{k=0}^{m(\lambda_i)-1} \alpha_i^k b_i^k \right| \right|_B^2}}_{=:D(\alpha)>0} \cdot \left| \left| \underbrace{\alpha_1^0 \bar{m} + \sum_{i=2}^n \sum_{k=0}^{m(\lambda_i)-1} \alpha_i^k b_i^k}_{=x} \right| \right|_B^2 \\
		& \le - b \underbrace{\left( \min_{\alpha \in P} D(\alpha)\right)}_{=:D>0} ||x||_B^2
		\end{align*}
		As a final preparation we note that, by the same reasoning as in the proof of Theorem \ref{Theorem:Existence}, the set $\mathcal{P}(\mathcal{S})$ is flow invariant for $\dot{m}(t) = f^d(m(t))$.
		Thus, the set $\mathcal{P}(\mathcal{S})- \bar{m}$ is flow invariant for $\dot{x}(t) = \tilde{f}^d(x(t))$.
		
		By definition of the derivative (which in particular yields $||\tilde{f}^d(x)-Ax||_B \in o(||x||_B)$ in a neighbourhood of $0$) and Cauchy's inequality we have
		$$0=\lim_{x \rightarrow 0} \frac{||\tilde{f}^d(x)- Ax||_B}{||x||_B} = \lim_{x \rightarrow 0} \frac{||\tilde{f}^d(x)- Ax||_B\cdot ||x||_B}{||x||_B^2} \ge  \lim_{x \rightarrow 0} \frac{\langle \tilde{f}^d(x)-Ax, x\rangle_B}{||x||_B^2}= 0.$$
		Since for all $x \in \mathcal{P}(\mathcal{S})- \bar{m}$ we have $\langle Ax,x\rangle_B \le -b D||x||_B^2$, there is a $\delta>0$ such that for all $x \in \overline{N_\delta(0)} \cap (\mathcal{P}(\mathcal{S})- \bar{m})$ it holds that $\langle \tilde{f}^d(x),x \rangle_B \le -cD ||x||_B^2.$ 
		If $\tilde{x} \in N_\delta(0)$ such that $\mathcal{D}(\tilde{x} + \bar{m}) \neq \{d\}$, then make $\delta$ smaller such that $\mathcal{D}(x + \bar{m}) = \{d\}$ for all $x \in \mathcal{P}(\mathcal{S})$.
		
		Now let $x_0 \in N_\delta(0) \cap (\mathcal{P}(\mathcal{S}) - \bar{m})$.
		Then by Peano's existence theorem there is a solution $x: [0,\infty) \rightarrow \mathbb{R}^S$ of the initial value problem $\dot{x}(t)= \tilde{f}^d(x(t))$, $x(0)=x_0$ and, furthermore, any solution $x: [0,t_0) \rightarrow \mathbb{R}^S$ can be extended on $[0,\infty)$.
		Let $x: [0,t_0]\rightarrow \mathbb{R}^S$ be a solution curve of the differential equation $\dot{x}(t)= \tilde{f}^d(x(t))$ in $\overline{N_\delta(0)}$ and assume that $x(t) \neq 0$ for all $0 \le t \le t_0$. 
		(If $x(\tilde{t})=0$ for some $\tilde{t}\ge 0$, then $x(t)=0$ for all $t \ge \tilde{t}$.) 
		
		Then it holds that
		\begin{equation}
			\label{Equation:ExponentialConvergenceNonConstant}
			\frac{\partial}{\partial t} ||x(t)||_B  = \frac{1}{||x(t)||_B} \langle \dot{x}(t), x(t) \rangle_B \le -cD ||x(t)||_B,
		\end{equation}
		which means that $||x(t)||_B$ is strictly decreasing on $[0,t_0]$.
		Thus, $x(t) \in N_\delta(0)$ for all $t \in [t_0, t_0+\tilde{\epsilon}]$.
		Repeating this argument, we obtain by \citet[Section 8.5]{HirschNonlinearSink} that $x(t) \in N_\delta(0)$ for all $t \ge 0$.
		Furthermore, the estimate \eqref{Equation:ExponentialConvergenceNonConstant} yields that $||x(t)||_B \le e^{-cDt} ||x(0)||_B$, which is the desired exponential convergence.
	\end{proof}

	These theorems can be directly applied in examples:
	Indeed, we obtain for the consumer choice model introduced in Section \ref{Section:Process} and analysed in \citet{NeumannComputation} that local convergence happens to any deterministic stationary equilibrium where the equilibrium distribution does not equal the boundary value $k_1$ or $k_2$, respectively.
	Also in a simplified version of the corruption model of \citet{KolokoltsovCorruption2017}, which has also been analysed in \citet{NeumannComputation}, we obtain local convergence for several stationary equilibria having a deterministic equilibrium strategy that is unique for the equilibrium point. More precisely, we obtain for any parameter choice local convergence towards those deterministic stationary equilibria where the equilibrium distribution lies in the interior of $\mathcal{P}(\mathcal{S})$  and for some parameter constellations we also obtain local convergence towards the deterministic stationary equilibria where the equilibrium distribution is $(1,0,0)$ or $(0,1,0)$.
		
	\section{Global Convergence for a Two Strategy Setting}
	\label{Section:Global}
	
	The question of global convergence  ``Given an arbitrary initial condition $m_0 \in \mathcal{P}(\mathcal{S})$ does any trajectory converge towards some mean field equilibrium?'' is much more complex. 
	Here we provide a statement for the case that $\mathcal{U}:= \{d \in D^s: \mathcal{D}(m) = \{d\} \}$ consists of exactly two strategies, i.e. $\mathcal{U} = \{d^1,d^2\}$.
	The statement does not directly yield the desired convergence statement, instead we only obtain convergence towards equilibria with a deterministic equilibrium strategy or that the trajectory remains in a set where the two strategies from $\mathcal{U}$ are simultaneously optimal.
	However, relying on example-specific properties, we can then often prove the convergence towards the mixed strategy equilibria by hand.
	
	If $\mathcal{U} = \{d^1,d^2\}$ then the differential inclusion \eqref{Equation:DifferentialInclusion} describing the myopic adjustment process simplifies substantially:
	Define
	$$g(m):=\left( V^{d^2}(m)-V^{d^1}(m) \right) \cdot 1 = \left( (\beta I - Q^{d^2}(m))^{-1} r^{d^2}(m) - (\beta I - Q^{d^1}(m))^{-1} r^{d^1}(m)\right) \cdot 1$$ and assume that $g$ is twice continuously differentiable, i.e. assume that $Q_{ija}$ and $r_{ia}$ are twice continuously differentiable for all $i,j \in \mathcal{S}$ and $a \in \mathcal{A}$ on some open superset $O$ of $\mathcal{P}(\mathcal{S})$.
	Then
	$$F(m):= \begin{cases}
	\left( \sum_{i \in \mathcal{S}} m_i Q_{ij}^{d^1}(m) \right)_{j \in \mathcal{S}} & g(m)<0 \\
	\text{conv} \left\{ 	\left( \sum_{i \in \mathcal{S}}  m_i Q_{ij}^{d^1}(m) \right)_{j \in \mathcal{S}}, 	\left( \sum_{i \in \mathcal{S}}  m_i Q_{ij}^{d^2}(m) \right)_{j \in \mathcal{S}}  \right\} & g(m) =0 \\
		\left( \sum_{i \in \mathcal{S}} m_i Q_{ij}^{d^2}(m) \right)_{j \in \mathcal{S}} &g(m)>0
	\end{cases}.$$
	
	This means, that on $g(m) \neq 0$ the trajectory is the trajectory of a nonlinear Markov chain with generator $Q^{d^1}(\cdot)$ whenever $g(m)<0$ and $Q^{d^2}(\cdot)$ whenever $g(m)>0$. 
	These processes are a generalization of a classical Markov chain with the new feature that the transition probabilities do not only depend on the current state, but also on the current distribution of the process. For more details consider \citet{KolokoltsovNonLinearMakov} and (in particular regarding the long-term behaviour) \citet{NeumannNonlinearMC}.
	Thus, the processes are characterized through the transition probabilities $(P_{ij}(t,m))_{i,j\in \mathcal{S}}$, which describe the probability to be in state $j$ and time $t$ when at time $0$ the state was $i$ and the distribution was $0$, or (non-uniquely) through the marginal distributions $\Phi^t_i(m)$, which describes the probability to be in state $i$ at time $t$ when the initial distribution was $m$. 
	One can show that it is indeed sufficient to characterize a nonlinear Markov chain through a nonlinear generator, that is a Lipschitz continuous function $Q: \mathcal{P}(\mathcal{S}) \rightarrow \mathbb{R}^{S \times S}$ such that $Q(m)$ is a conservative generator for all $m \in \mathcal{P}(\mathcal{S})$.
	
	In the following theorem, we desire that the considered nonlinear Markov chains behave well in the long-term.
	As in the theory of standard Markov chains, the invariant distribution is a central tool for this analysis and it solves the (now non-linear) equation $0 = Q(m) m^T$.
	 However, a weaker condition than classical ergodicity is enough. Indeed, it suffices to require that the nonlinear Markov chain converges in the limit towards some invariant distribution, that is that for all $m_0 \in \mathcal{P}(\mathcal{S})$ there is an invariant distribution $\bar{m}(m_0)$ such that 
	\[
	\lim_{t \rightarrow \infty} || \Phi^t(m_0) - \bar{m}(m_0) || = 0.
	\]
	This condition is indeed weaker than ergodicity, see \citet[Section 4.2]{NeumannNonlinearMC}.
	
	With these preparations, we are able to formulate and prove the global convergence theorem:
	
	\begin{theorem}
		\label{Theorem:GlobalConvergence}
		Assume that for all $m \in O$ such that $g(m)=0$ it holds that $\nabla g(m)\neq 0$.
		Furthermore, assume that the nonlinear Markov chains with transition rate matrix functions $Q^{d^1}(m)$ and $Q^{d^2}(m)$ converge in the limit towards some stationary distribution.
		
		\begin{itemize}
			\item[(i)] If for all $m \in O$ such that $g(m)=0$ it holds that
			$$\langle (Q^{d^1}(m))^T m, \nabla g(m) \rangle >0 \quad \text{and} \quad \langle (Q^{d^2}(m))^T m, \nabla (-g)(m)\rangle <0,$$ then the myopic adjustment processes converges towards some stationary mean field equilibrium with deterministic equilibrium strategy from $\mathcal{U}$.
			\item[(ii)] If for all $m \in O$ such that $g(m)=0$ it holds that
			$$\langle (Q^{d^1}(m))^T m, \nabla g(m) \rangle <0 \quad \text{and} \quad \langle (Q^{d^2}(m))^T m, \nabla (-g)(m) \rangle >0,$$ then the myopic adjustment processes converges towards some stationary mean field equilibrium with deterministic equilibrium strategy from $\mathcal{U}$.
			\item[(iii)] If for all $m \in O$ such that $g(m)=0$ it holds that
			$$\langle (Q^{d^1}(m))^Tm, \nabla g(m)\rangle \ge 0 \quad \text{and} \quad \langle (Q^{d^2}(m))^T m, \nabla (-g)(m)\rangle \ge 0,$$ then the myopic adjustment process either converges towards a deterministic stationary mean field equilibrium with equilibrium strategy from $\mathcal{U}$ or there is a $T>0$ such that the processes satisfies $g(m(t))=0$ for all $t >T$.
		\end{itemize} 
	\end{theorem}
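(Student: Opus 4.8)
The plan is to reduce everything to the behaviour of the scalar function $\phi(t):=g(m(t))$ along a trajectory $m$ of the inclusion. Since $g$ is $C^2$ and $m$ is absolutely continuous, $\phi$ is absolutely continuous with $\dot\phi(t)=\langle\dot m(t),\nabla g(m(t))\rangle$ for almost every $t$, where $\dot m(t)\in F(m(t))$. On $\{g<0\}$ the inclusion is the single-valued nonlinear Markov chain $\dot m=(Q^{d^1}(m))^Tm$ and on $\{g>0\}$ it is $\dot m=(Q^{d^2}(m))^Tm$; by hypothesis each of these flows $\Phi^t$ converges to an invariant distribution $\bar m(m_0)$. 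The two sign conditions are exactly statements about $\dot\phi$ on the switching surface $\{g=0\}$: for $v\in F(m)$ with $g(m)=0$ the value $\langle v,\nabla g(m)\rangle$ lies between $\langle(Q^{d^1})^Tm,\nabla g\rangle$ and $\langle(Q^{d^2})^Tm,\nabla g\rangle$, so the hypotheses pin down whether, and in which direction, trajectories cross the surface.

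First I would treat the mirror-image cases (i) and (ii). In case (i) both inner products are strictly positive, so $\dot\phi>0$ for \emph{every} selection of $F$ whenever $g=0$; a Nagumo/tangent-cone argument then shows that $\{g\ge 0\}$ is (strongly) forward invariant and, moreover, that $\phi$ can never decrease back to $0$ from above. Hence any trajectory follows one of two patterns: either $g(m(t))<0$ for all $t$, in which case $m$ coincides with the flow of $Q^{d^1}$ and converges to $\bar m^1$ with $g(\bar m^1)\le 0$; or it reaches $\{g=0\}$ in finite time, crosses strictly upward, stays in $\{g>0\}$, coincides with the flow of $Q^{d^2}$ and converges to $\bar m^2$ with $g(\bar m^2)\ge 0$. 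In both cases the limit is an invariant distribution of the governing chain at which the corresponding deterministic strategy is (weakly) optimal, hence a stationary mean field equilibrium with strategy from $\mathcal{U}$. Case (ii) follows verbatim after interchanging $d^1\leftrightarrow d^2$ and $\{g>0\}\leftrightarrow\{g<0\}$.

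For case (iii) the conditions give $\dot\phi\ge 0$ along $Q^{d^1}$ and $\dot\phi\le 0$ along $Q^{d^2}$ on $\{g=0\}$, so both chains push toward the surface, which is therefore attracting. The plan is to prove the dichotomy directly: if a trajectory stays in one open region for all large times it again coincides with a single nonlinear chain and converges to a deterministic equilibrium; otherwise it returns to the surface. I would analyse a maximal excursion $(a,b)$ into, say, $\{g>0\}$, where $\phi(a)=0$, $\phi>0$ on $(a,b)$ and $\dot\phi(a^+)=\langle(Q^{d^2})^Tm(a),\nabla g(m(a))\rangle\le 0$. Positivity of $\phi$ immediately after $a$ forces this right derivative to vanish, so excursions can only be launched from the tangency set $\{g=0,\ \langle(Q^{d^2})^Tm,\nabla g\rangle=0\}$. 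Using the $C^2$ regularity of $g$ (hence of $m$ inside the region) and $\nabla g\neq 0$ to control the second-order behaviour of $\phi$ near such tangencies, I would conclude that a trajectory that does not converge inside an open region is eventually absorbed by the surface, i.e. $g(m(t))=0$ for all $t>T$.

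The main obstacle is precisely the analysis at the switching surface in case (iii). Because the inclusion is of Filippov type its solutions need not be unique there, and the non-strict $\ge$ conditions permit degenerate tangencies where $\langle(Q^{d^i})^Tm,\nabla g\rangle=0$; ruling out trajectories that chatter across the surface indefinitely while converging to a non-equilibrium surface point is the delicate step. Converting the infinitesimal sign conditions into the global statements ``$\{g\ge 0\}$ is forward invariant'' (cases (i), (ii)) and ``the trajectory either converges inside a region or is eventually trapped on the surface'' (case (iii)) is the technical heart of the proof, and the hypotheses that $g$ is twice continuously differentiable and that $\nabla g\neq 0$ on $\{g=0\}$ are exactly what I expect to need to handle the tangential contacts and exclude pathological chattering.
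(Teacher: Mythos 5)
Your reduction to the scalar function $\phi(t)=g(m(t))$ and your handling of cases (i) and (ii) follow essentially the same route as the paper: the paper also first observes that a trajectory which eventually stays in $\{g<0\}$ or in $\{g>0\}$ is the marginal flow of a single nonlinear Markov chain, hence converges to an invariant distribution, which (the corresponding strategy being at least weakly optimal at the limit) is a stationary equilibrium with strategy from $\mathcal{U}$; it then uses the strict sign conditions on $\{g=0\}$ --- which, being strict, extend by continuity to a neighbourhood of the surface --- to show that a trajectory can neither linger on nor re-enter the surface. Up to the (shared) level of informality, this part of your proposal is sound and matches the paper.

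The genuine gap is in case (iii), and you flag it yourself without closing it. The paper's proof of (iii) rests on a stronger claim than your ``eventual absorption'': once $g(m(T))=0$, then $g(m(t))=0$ for \emph{all} $t\ge T$, i.e.\ excursions off the surface never even start, so there are no tangency launches and no chattering to rule out. Your proposed route --- classify excursion starting points as tangencies of $\langle (Q^{d^2}(m))^Tm,\nabla g(m)\rangle$ and then ``control the second-order behaviour of $\phi$'' --- cannot be completed from the hypotheses: the conditions in (iii) are non-strict inequalities holding only on $\{g=0\}$, so they give no information on the sign of $\dot\phi$, let alone of $\ddot\phi$, at points with $g\neq 0$ arbitrarily close to the surface (and $\phi$ need not be twice differentiable where the dynamics switch). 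What actually closes the argument --- and makes rigorous the sign contradiction that the paper states rather tersely --- is a Gronwall comparison: $h^i(m):=\langle (Q^{d^i}(m))^Tm,\nabla g(m)\rangle$ is locally Lipschitz, and since $\nabla g\neq 0$ on $\{g=0\}$ the distance from $m$ to the surface is locally bounded by a constant times $|g(m)|$; hence along a putative excursion into, say, $\{g>0\}$ starting at time $a$ with $\phi(a)=0$ one has $\dot\phi(t)=h^2(m(t))\le h^2(\text{projection of }m(t))+L\,\mathrm{dist}(m(t),\{g=0\})\le C\phi(t)$, and Gronwall forces $\phi\equiv 0$, so the excursion does not exist. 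This invariance step is what your proposal is missing; without it the dichotomy claimed in (iii) is not established, and with it the maximal-excursion and tangency machinery becomes unnecessary.
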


	The gradient conditions of the theorem have an intuition: More precisely, the conditions in case (i) state that when $g(m)=0$ the population distribution heads to the set where the strategy $d^1$ is optimal, and the conditions in case (ii) state that when $g(m)=0$ the population distribution heads to the set where the strategy $d^2$ is optimal. In case (iii) the conditions say, that when $g(m)=0$ and the population chooses $d^1$ the distribution tends into the set where $d^2$ is optimal and when $g(m)=0$ and the population chooses $d^2$ the distribution tends into the set where $d^1$ is optimal.

	\begin{proof}[Proof of Theorem \ref{Theorem:GlobalConvergence}]
		We first note that if there is a $T \ge 0$ such that the trajectory satisfies $g(m(t)) <0$ or $g(m(t))>0$ for all $t \ge T$, then the solution of \eqref{Equation:DifferentialInclusion} is also a solution of $\dot{m}(t) = Q^{d^1}(m(t))$ or $\dot{m}(t) = Q^{d^2}(m(t))$, respectively, which means that $(m(t))_{t \ge T}$ are the marginals of a nonlinear Markov chain.
		By assumption, we thus obtain convergence towards some stationary point, which is, since $g(m(t))<0$ or $g(m(t))>0$ for all $t \in [T,\infty)$, a stationary equilibrium.
		
		In case (i) whenever $g(m(T))=0$ for some $T\ge 0$, then $g(m(t))>0$ for all $t \ge T$.
		Indeed, assume that $g(m(t))=0$ for all $t \in [T,T+\epsilon]$ for some $\epsilon>0$.
		Then for almost all $t \in [T,T+\epsilon]$ it holds that
		$$\frac{\partial}{\partial t} g(m(t)) = \left\langle \lambda Q^{d^1}(m(t))^T m(t) + (1- \lambda) Q^{d^2}(m(t))^T m(t), \nabla g(m(t)) \right\rangle >0,$$ which is a contradiction.
		Similarly, if we assume that $g(m(t))<0$ for all $t \in(T, T+\epsilon)$ with $\epsilon>0$, then it holds for almost all $t \in (T, T+\epsilon)$ that
		$$\frac{\partial}{\partial t} g(m(t)) = \left\langle Q^{d^1}(m(t))^T m(t), \nabla g(m(t)) \right\rangle >0,$$
		again a contradiction.
		Thus, it either holds that $g(m(t))<0$ for all $t \ge 0$ or that $g(m(t))>0$ for all $t \ge T$ with $T\ge 0$, which by the first observation yields the desired convergence.
			
		In case (iii) we have that whenever $g(m(T))=0$ for some $T \ge 0$, then $g(m(t))= 0$ for all $t \ge T$.
		Indeed, assume that there is a $\tilde{T} := \inf \{t \ge T: g(m(t)) \neq 0\} < \infty$.
		If $\tilde{T} = \inf \{ t \ge T: g(m(t))<0\}$, then since $m$ is absolutely continuous and $g$ is continuously differentiable, we obtain for some $\epsilon_1>0$ that
		$$0 > \frac{\partial}{\partial t} g(m(t)) = \langle Q^{d^1}(m(t))^T m(t), \nabla g(m(t)) \ge 0$$ for almost all $t \in [\tilde{T}, \tilde{T} + \epsilon_1]$, a contradiction.
		Similarly, if $\tilde{T} = \inf \{ t \ge T: g(m(t))>0\}$, then we obtain for some $\epsilon_2>0$ that
		$$0 < \frac{\partial}{\partial t} g(m(t)) = \langle Q^{d^2}(m(t))^T m(t)), \nabla g(m(t)) \le 0$$ for all $t \in [\tilde{T}, \tilde{T}+\epsilon_2]$, a contradiction.		
		Thus, either $g(m(t))<0$ for all $t \ge 0$, or $g(m(t))>0$ for all $t \ge 0$, in which case we obtain convergence towards some stationary equilibrium with a deterministic equilibrium strategy, or there is a $T>0$ such that $g(m(t))=0$ for all $t \ge T$.
	\end{proof}
	
	\begin{example}
		\label{Example:Nonlinear}
	Let us consider the following example, which consists of two ``good'' states, where a positive reward is earned, and one ``bad'' state, where no reward is earned. 
	The agents in the ``good'' state face congestion effects, namely there is a risk, increasing in the share of individuals in that state, to go to the ``bad'' state. 
	The control options are to switch between the two good states. 
	One can interpret this model as a stylized version to model the choice between two mobile phone providers, where the customer faces the risk of a breakdown in connection that increases in the share of customers using the same provider. 
	For simplicity, we assume that agents in the ``bad'' state have no choice option, but recover into each of the two states with equal probability.
	
	\begin{figure}[h]
		\label{ExampleConsumerChoiceCongestion}
		\begin{center}
			\begin{tikzpicture}
			
			\node[state] at (0,0) (A)     {1:1P};
			\node[state] at (8,0) (B)     {2:1P};
			\node[state] at (4, -6.5)  (C)     {3:0P};
			
			\draw[every loop, >=latex, auto=right] 
			(A) edge[bend right=20, auto=left] node {\textcolor{blue}{$b$}/\textcolor{red}{$0$}} (B)
			(B) edge[bend right=20] node {\textcolor{blue}{$b$}/\textcolor{red}{$0$}} (A)
			(B) edge[bend right=20] node {$em_2+\epsilon$} (C)
			(C) edge[bend right=20] node {$\lambda$} (B)
			(A) edge[bend right=20] node {$em_1+\epsilon$} (C)
			(C) edge[bend right=20] node {$\lambda$} (A);
			
			\end{tikzpicture}
		\end{center}
		\caption{Representation of Example \ref{Example:Nonlinear}}
	\end{figure}
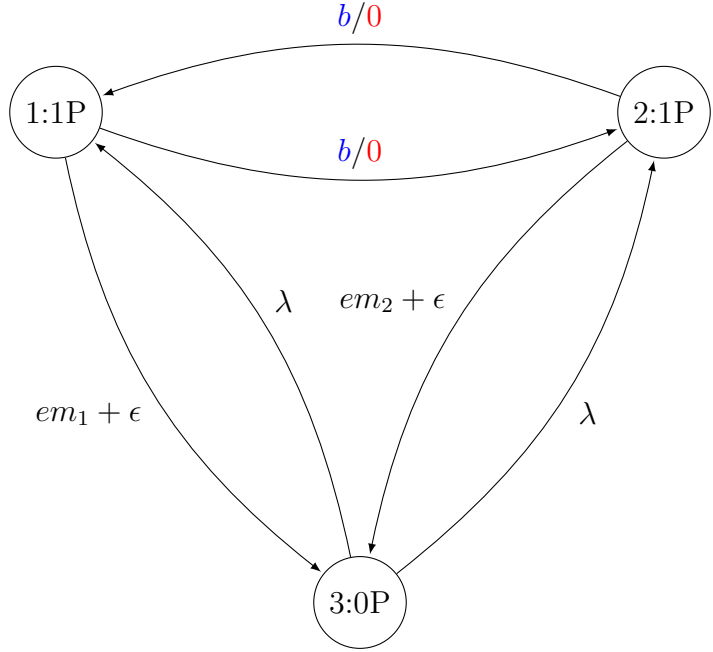 The formal characterization is given by $\mathcal{S}=\{1,2,3\}$ and $\mathcal{A}= \{ \mathit{change}, \mathit{stay}\}$ together with
	\begin{align*}
	Q_{\cdot \cdot \mathit{change}} &= \begin{pmatrix}
	-(b+em_1+\epsilon) & b & em_1 + \epsilon \\
	b & -(b+e m_2+ \epsilon) & em_2 + \epsilon \\
	\lambda & \lambda & -2\lambda
	\end{pmatrix} \\ 
	Q_{\cdot \cdot \mathit{stay}} &= \begin{pmatrix}
	-(em_1 + \epsilon) & 0 & em_1 + \epsilon \\
	0 & -(em_2 + \epsilon) & em_2 + \epsilon \\
	\lambda & \lambda & -2 \lambda
	\end{pmatrix}
	\end{align*} and $r_{\cdot \mathit{stay}}=r_{\cdot \mathit{change}} =(1,1,0)$, where all constants are strictly positive. 
	A visualization of the model is given in Figure \ref{ExampleConsumerChoiceCongestion}.
	
	In \citet{NeumannDissertation} it is shown that there are infinitely many mixed strategy equilibria with equilibrium distribution $$\left( \frac{\sqrt{4 \lambda^2 + 8 \lambda \epsilon + \epsilon^2}-2 \lambda - \epsilon}{2 \epsilon}, \frac{\sqrt{4 \lambda^2 + 8 \lambda \epsilon + \epsilon^2}-2 \lambda - \epsilon}{2 \epsilon}, \frac{4 \epsilon + 4 \lambda - \sqrt{4 \lambda^2 + 8 \epsilon \lambda + \epsilon^2}}{2 \epsilon} \right)$$ with equilibrium strategies satisfying $\pi_{1, \mathit{change}} = \pi_{2, \mathit{change}}$.
	
	To apply Theorem \ref{Theorem:GlobalConvergence} we first note, that in \citet{NeumannNonlinearMC} it is shown that the relevant Markov chains are strongly ergodic and thus, converges to some limit distribution. 
	Moreover, choosing $\mathcal{U} = \{ \{ \mathit{change}\} \times \{\mathit{stay}\}, \{\mathit{stay} \} \times \{ \mathit{change}\}\}$, $O = (- \frac{\epsilon}{b}, \infty) \times (-\frac{\epsilon}{b}, \infty) \times \mathbb{R}$ and $g(m)= m_1 -m_2$ we obtain that
	$$\left\langle Q^{cs}(m))^T m, \nabla -g(m) \right \rangle =  2 bm_2 \ge 0 \quad \text{and} \quad \left\langle Q^{sc}(m))^Tm, \nabla g(m) \right\rangle = 2 bm_2 \ge 0.$$
	
	Thus,  Theorem \ref{Theorem:GlobalConvergence} yields that either convergence towards a stationary equilibrium with an equilibrium strategy from $\mathcal{U}$ happens or that there is a $T\ge 0$ such that $g(m(t))=0$ for all $t \ge T$. 
	Since there is no stationary equilibrium with a equilibrium strategy from $\mathcal{U}$ it is clear that there is a $T\ge 0$ such that $g(m(t))=0$ for all $t \ge T$, which means that $m_1(t)=m_2(t)$ for all $t \ge T$.
	Thus, also $\dot{m}_1(t)=\dot{m}_2(t)$.
	By \eqref{Equation:DifferentialInclusion}, this yields that
	\begin{align*}
	&- \pi_{1, \mathit{change}}(t) b m_1(t) - e m_1(t)^2 - \epsilon m_1(t) + \pi_{2,\mathit{change}}(t) b m_1(t) + \lambda m_3(t) \\
	&\quad = \pi_{1,\mathit{change}}(t) b m_1(t) - \pi_{2, \mathit{change}}(t) b m_1(t) - em_1(t)^2 - \epsilon m_1(t) + \lambda m_3(t), \\
	&\text{ i.e. } - \pi_{1, \mathit{change}}(t) b m_1(t) + \pi_{2,\mathit{change}}(t) bm_1(t)=0.
	\end{align*} 
	Thus, for almost all $t \ge T$ the trajectory of the myopic adjustment process has to satisfy $$\dot{m}_1(t)  = -e m_1(t)^2 - \left( \epsilon + 2 \lambda \right) m_1(t) + \lambda,$$ which is a Riccati equation, for which $[0,1]$ is flow invariant and for which a unique classical solution for any initial condition $m_0 \in [0,1]$ exists.
	Numerical simulations indicate that in our setting with initial conditions $m_0 \in [0,1]$ convergence towards the distribution of the stationary mixed strategy equilibria is likely. \demo
	\end{example}

	\appendix
	\section{Appendix}
	\label{appendix}
	
	\begin{proof}[Proof of Theorem \ref{Theorem:Existence}]
		We show that the conditions of Lemma 5.1 in \citet{DeimlingMultivaued} are satisfied, as this yields the desired existence statement. 
		More precisely, we show in the following that
		\begin{itemize}
			\item[(i)] $F$ is upper semicontinuous,
			\item[(ii)] $F(m)$ is a closed, convex set for all $m \in \mathcal{P}(\mathcal{S})$,
			\item[(iii)] there is a constant $c>0$ such that $||F(m)||:= \sup \{ ||y|| : y \in F(m) \} \le c (1 + ||m||)$ for all $m \in \mathcal{P}(\mathcal{S})$, and
			\item[(iv)] $F(m) \cap T_{\mathcal{P}(\mathcal{S})} (m) \neq \emptyset$ for all $M \in \mathcal{P}(\mathcal{S})$ where 
			\begin{align*}
			T_{\mathcal{P}(\mathcal{S})}(m) &= \left\{ y \in \mathbb{R}^S: \liminf_{h \downarrow 0} \frac{d(m+hy, \mathcal{P}(\mathcal{S}))}{h} = 0 \right\} \\
			&= \left\{ y \in \mathbb{R}^S : y_i \ge 0 \forall i \in \mathcal{S} \text{ s.t. } m_i = 0 \wedge \sum_{i \in \mathcal{S}} y_i = 0 \right\}
			\end{align*} \citep[Proposition 5.1.7]{AubinDifferentialInclusions}.			
		\end{itemize}
		
		(i): Let $m \in \mathcal{P}(\mathcal{S})$ and let $N \subseteq \mathbb{R}^S$ be an open set such that $F(m) \subseteq N$. 
		Since $2^{\mathbb{R}^S}$ equipped with the Hausdorff distance $H(\cdot, \cdot)$ is a metric space, it suffices to consider sets of the form $N=N_\epsilon(F(m))$ with $\epsilon>0$.
		Since $\mathcal{D}(m) = \{d \in D^s: V^d(m) = V^\ast(m)\}$, we find for any $d' \in D^s \setminus \mathcal{D}(m)$ a constant $c_{d'}>0$ such that $V^{d'}(m)< V^\ast - c_{d'}$.
		Therefore by finiteness of $D^s$, the constant $c= \min_{d \in D^s \setminus \mathcal{D}(m)} c_{d'} >0$ satisfies $V^{d'}(m) < V^\ast(m) - c$ for all $d' \in D^s \setminus \mathcal{D}(m)$.
		Since $V^d: \mathcal{P}(\mathcal{S}) \rightarrow \mathbb{R}^S$ is continuous for every $d \in D^s$, there is a $\delta_d>0$ such  that $\tilde{m} \in N_{\delta_d}(m) \Rightarrow ||V^d(\tilde{m})-V^d(m)|| < \frac{c}{3}$.
		In particular, choosing $\delta_1 := \min_{d \in D^d} \delta_d$ we obtain for all $\tilde{m} \in N_{\delta_1}(m)$ and all $d' \in D^s \setminus \mathcal{D}(m)$ that pointwise
		\begin{align*}
			V^\ast(\tilde{m}) - V^{d'}(\tilde{m}) &\ge \left( V^\ast(\tilde{m}) - V^\ast(m) \right) + \left( V^\ast(m) - V^{d'}(m) \right) + \left( V^{d'}(m) - V^{d'}(\tilde{m} \right) \\
			> - \frac{c}{3} + c - \frac{c}{3} > 0.
		\end{align*} Thus, $d' \notin \mathcal{D}(\tilde{m})$, that is $\mathcal{D}(\tilde{m}) \subseteq \mathcal{D}(m)$.
		Furthermore, for $d \in D^s$ the map $F^d: \mathcal{P}(\mathcal{S})\rightarrow \mathbb{R}^S$, $m \mapsto \left( \sum_{i \in \mathcal{S}} \sum_{a \in \mathcal{A}} m_i Q_{ija}(m) d_{ia} \right)_{j \in \mathcal{S}}$ is continuous.
		Therefore, there is a $\delta_{2,d}>0$ such that $\tilde{m} \in N_{\delta_{2,d}}(m) \Rightarrow ||F^d(\tilde{m}) - F^d(m)|| < \epsilon$. 
		Set $\delta_2 = \min_{d \in D^s} \delta_{2,d}$
		Then for $\delta:= \min \{\delta_1, \delta_2\}$ it holds that $\tilde{m} \in N_\delta(m) \Rightarrow F(\tilde{m}) \subseteq N_\epsilon(F(m))$.
		
		(ii) Since for all $m \in \mathcal{P}(\mathcal{S})$ the set $F(m)$ is convex polytope, it is closed and convex.
		
		(iii) Since $Q_{ija}(\cdot)$ is Lipschitz continuous for all $i,j \in \mathcal{S}$ and all $a \in \mathcal{A}$ it is moreover uniformly bounded in $m \in \mathcal{P}(\mathcal{S})$, $i,j \in \mathcal{S}$ and $a \in \mathcal{A}$ by some constant, which we denote by $M$.
		Thus, for all $d \in D^s \supseteq \mathcal{D}(m)$ we have
		$$||F^d(m)|| = \sum_{j \in \mathcal{S}} \left|\sum_{i \in \mathcal{S}} \sum_{a \in \mathcal{A}} m_i Q_{ija}(m) d_{ia} \right| \le \sum_{j \in \mathcal{S}} \sum_{i \in \mathcal{S}} \sum_{a \in \mathcal{A}} m_i M d_{ia} = SM,$$ which implies, since $F(m)$ is a convex hull of $(F^d(m))_{d \in \mathcal{D}(m)}$ that $||F(m)||_1 \le SM$.
		
		(iv) The condition is trivially satisfied for all $m \in\text{int} ( \mathcal{P}(\mathcal{S}))$ since then $T_{\mathcal{P}(\mathcal{S})}(m) = \mathbb{R}^S$ and $F(m) \neq \emptyset$ because $\mathcal{D}(m) \neq \emptyset$.
		Now, let $m \in \partial \mathcal{P}(\mathcal{S})$ be a boundary point. 
		Then there is at least one $j \in \mathcal{S}$ such that $m_j=0$.  
		Since the only non-positive column entry of $Q_{\cdot ja}(m)$ is in row $j$, this implies that $F^d(m)_j \ge 0$. 
		Moreover,
		$$\sum_{j \in \mathcal{S}} F^d(m)_j = \sum_{i \in \mathcal{S}} \sum_{ \in \mathcal{A}} \underbrace{\left(\sum_{j \in \mathcal{S}} Q_{ija}(m) \right)}_{=0} d_{ia}  = 0,$$
		which yields that $F^d(m) \in T_{\mathcal{P}(\mathcal{S})}(m)$.
		Since $F(m)$ is a convex combination of $F^d(m)$ and $T_{\mathcal{P}(\mathcal{S})}(m)$ is convex, the desired claim follows.
	\end{proof}
	
	\bibliographystyle{plainnat}
	\bibliography{literatureMyopic}

\end{document}